\setlist[enumerate,1]{label=(\roman*),font=\normalfont}
\newcommand{\ap}{\alpha_{\text{\fontsize{4}{5}\selectfont +}}^\varepsilon}
\newcommand{\am}{\alpha_{\text{\fontsize{6}{7}\selectfont -}}^\varepsilon}
\newcommand{\be}{\beta}
\newcommand{\bh}{\frac{\beta^2}{2}}
\newcommand{\cb}{\kappa_\beta}
\newcommand{\e}{\mathrm{e}}
\newcommand{\eb}{\e^{\beta X_u-n \cb}}
\newcommand{\ebt}{\e^{\beta X_t(u)-\frac{\beta^2}{2}t}}
\newcommand{\ebsa}{\e^{\beta (a+X_s(u))-\frac{\beta^2}{2}s}}
\newcommand{\ebta}{\e^{\beta (a+X_t(u))-\frac{\beta^2}{2}t}}
\newcommand{\ebsu}{\e^{\beta X_s(u)-\frac{\beta^2}{2}s}}
\newcommand{\ebsv}{\e^{\beta X_s(v)-\frac{\beta^2}{2}s}}
\newcommand{\ebuv}{\e^{\beta (X_s(u)+X_s(v))-\beta^2 s}}
\newcommand{\ep}{\varepsilon}
\newcommand{\es}{\mathbb{E}}
\newcommand{\de}{\coloneqq}
\newcommand{\ds}{\mathrm{d}s}
\newcommand{\dt}{\mathrm{d}t}
\newcommand{\dx}{\mathrm{d}x}
\newcommand{\fn}{\mathcal F_n}
\newcommand{\ind}{\mathds{1}}
\newcommand{\nt}{\mathcal N_t}
\newcommand{\p}{\mathbb{P}}
\newcommand{\pne}{\mathsf \Pi^\varepsilon_n}
\newcommand{\re}{\mathbb{R}}
\newcommand{\summ}{\sum_{u\in\nt}}
\newcommand{\sun}{\sum_{|u|=n}}
\newcommand{\ft}{\partial \mathbb T}
\newcommand{\vne}{\mathsf V_n^\varepsilon}
\newcommand{\wb}{W_\beta}
\newcommand{\zb}{Z_\beta}
\newcommand{\znb}{Z_n(\beta)}
\theoremstyle{plain}
\newtheorem{thm}{Theorem}[section]
\newtheorem{prop}[thm]{Proposition}
\newtheorem{lem}[thm]{Lemma}
\theoremstyle{definition}
\theoremstyle{remark}
\newtheorem{rem}[thm]{Remark}
\title{The left tail of the subcritical derivative martingale in a branching random walk}
\author{
	 Benjamin \textsc{Bonnefont}\thanks{Sorbonne Universit\'e, LPSM, \texttt{benjamin.bonnefont@sorbonne-universite.fr}}

	Vincent \textsc{Vargas}\thanks{Universit\'e de Genève, Section de Mathématiques, \texttt{vincent.vargas@unige.ch}} 
}
\begin{document}
	
\maketitle

\begin{abstract}
Motivated by the study of the quantum Mabuchi theory \cite{lacoinrhodesvargas22}, we obtain in this work a sharp estimate on the left tail of the distribution of the so-called derivative martingale in the $L^4$ phase.
\end{abstract}

%\keywords{}

%{
%	\hypersetup{linkcolor=black}
%	\tableofcontents
%} 

%%%%%%%%%%%%%%%%%%%%%%%%%%%%%%%%%%%%%%

\section{Introduction}

%%%%%%%%%%%%%%%%%%%%%%%%%%%%%%%%%%%%%%

Branching random walks (BRW) have a long history in probability theory and its applications. Given a real parameter $\beta$ (the inverse temperature in the language of statistical physics), one can associate to a BRW a natural random measure called a multiplicative cascade. Multiplicative cascades were introduced by Mandelbrot in \cite{mandelbrot74} as a toy model for 
Gaussian multiplicative chaos (GMC), a random measure which models energy dissipation in a turbulent flow. The precise mathematical construction of GMC was established by Kahane in the landmark paper \cite{kahane85} following the works on turbulence by Kolmogorov-Obukhov \cite{kolmogorov62,obukhov62} and  Mandelbrot \cite{mandelbrot72}. In both aforementionned models, the main object of interest (a random measure) is obtained as the limit of a positive martingale and it is also very natural to inquire on the existence and the properties of the derivatives of these martingales which form yet another sequence of martingales. In the case of GMC theory, these derivatives appear as a crucial ingredient in the construction of the quantum Mabuchi theory \cite{lacoinrhodesvargas22} or as so-called logarithmic fields in conformal field theory \cite{zamolodchikov2004higher}. In this paper, we will work within the simplified framework of the Gaussian BRW as motivated by \cite{lacoinrhodesvargas22}, we will give the first sharp estimates on the left tail of the derivative.\\

We consider the case of a BRW with binary splitting and independent standard Gaussian increments. The process is indexed by the binary tree $\mathbb T=\cup_{n\geq 0}\, \{0,1\}^n$ where by convention $\{0,1\}^0=\{\varnothing\}$. For $u\in\mathbb T$, let us denote $|u|$ the length of $u$, $v\leq u $ if $v$ is an ancestor of $u$ and $u\wedge v$ the \textit{length} of the last common ancestor of $u$ and $v$. The value of the BRW at $u$ is given by
\[
X_u \de \sum_{\varnothing <v\leq u} G_v,
\]
where $(G_v)_{v\in\mathbb T}$ are i.i.d. standard Gaussian random variables\footnote{Here and hereafter, we adopt the classical conventions $\sum_\emptyset = 0$ and $\prod_\emptyset = 1$.}. The natural filtration of the process is given by $\mathcal F_n = \sigma(X_u,|u|\leq n)$. We can then define the additive martingale with real parameter $\beta$
\[
W_n(\beta)\coloneqq    \frac{1}{2^n}\sun \e^{\beta X_u- \frac{\beta^2}{2}n},
\]
which is a positive martingale with respect to $(\fn)$ and thus converges almost surely to a limit $\wb$. It is well known that this limit is non trivial if and only if $\beta\in(-\beta_c,\beta_c)$ where $\beta_c= \sqrt{2 \log 2}$. {\bf In the sequel, we will only consider the case $\beta\in[0,\beta_c)$}. The (almost surely positive) random variable $\wb$ is very well understood. For instance the right tail is known with high precision and follows a power law decay, see for instance \cite{guivarc1990extension}. In particular, $\wb$ is in $L^p$ if and only if $p < \left ( \frac{\beta_c}{\beta} \right )^2$.   \\

Since for all $\beta$, $W_n(\beta)$  is a martingale, all higher order derivatives with respect to $\beta$ are also martingales. In this paper, we will focus on the first derivative and hence study the following martingale 
\[
\znb\coloneqq  \dfrac{1}{2^n} \sun e^{\beta X_u-\frac{\beta^2}{2}n}\big(X_u-\beta n\big).
\]
\noindent
Thanks to the almost sure uniform convergence on (complex) compact sets included in the domain $\Lambda$ of figure \ref{domain} obtained in \cite{biggins92}, the function $\beta \mapsto \wb$ is almost surely analytical on $\Lambda$,
\begin{figure}[H]
	\centering
		
		\begin{tikzpicture}[scale=1.7]
			\draw [help lines] (-3,0) -- (3,0);
			\draw [help lines] (0,-2) -- (0,2);
			\draw [help lines] (2.35482/2,-0.05) -- (2.35482/2,0.05);
			\draw [help lines] (2.35482,0.05) -- (2.35482,0.05);
			
			\draw [dashed] (1.17741,1.17741) -- (1.17741,0);
			
			\draw [very thick, color = NavyBlue] (1.17741,1.17741) arc (45:135:1.6651);
			\draw [very thick, color = NavyBlue] (-1.17741,-1.17741) arc (-135:-45:1.6651);
			\draw [very thick, color = NavyBlue] (-1.17741,1.17741) -- (-2.35482,0);
			\draw [very thick, color = NavyBlue] (-1.17741,-1.17741) -- (-2.35482,0);
			\draw [very thick, color = NavyBlue] (1.17741,1.17741) -- (2.35482,0);
			\draw [very thick, color = NavyBlue] (1.17741,-1.17741) -- (2.35482,0);

			\filldraw [color = black] (2.35482,-0.0) circle (1pt)	node [below]{$\,\beta_c$};	
			\filldraw [color = black] (1.3,0.05) circle (0pt)	node [below]{ $\frac{\beta_c}{2}$};
			\draw (1.8,0.05) node [below]{ $\frac{\beta_c}{\sqrt2}$};	
			
			\filldraw [thick, color = black] (-2.35482,0) circle (1pt);
			\filldraw [thick, color = black] (1.17741,1.17741) circle (1pt);
			\filldraw [thick, color = black] (1.17741,-1.17741) circle (1pt);
			\filldraw [thick, color = black] (-1.17741,1.17741) circle (1pt);
			\filldraw [thick, color = black] (-1.17741,-1.17741) circle (1pt);
			\draw [thick, color = black] (0,0) circle (1.6651);
			\draw [thick, color = Emerald] (0,0) circle (1.17741);

		\end{tikzpicture}
		
		\caption{\small The domain of convergence of the additive martingale $\Lambda$ is delimited by the blue curves. The black circle delimits the $L^2$ phase and the inner circle the $L^4$ phase.}
		
		\label{domain}
\end{figure}
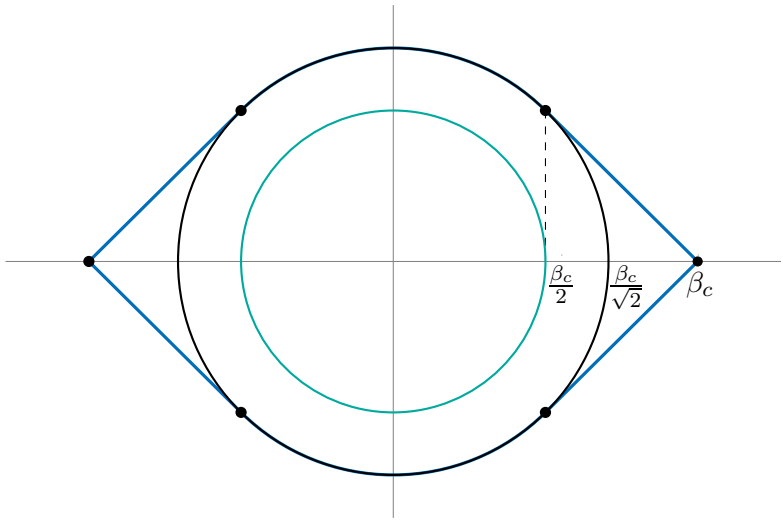
\noindent
and $\znb$ converges also almost surely to a non trivial limit $\zb=\partial_\be \wb$ when $\beta \in (0,\beta_c)$\footnote{In fact, the convergence holds in the whole domain of convergence.}. Contrary to the critical case, the limit $Z_\be$ is signed and has a highly non symmetrical distribution. Indeed, the right tail $Z_\be$ is expected to behave similarly to the right tail of $\wb$ up to logarithmic corrections whereas the left tail is expected to be very thin. The behavior of the left tail was conjectured for general  GMC measures in \cite[Conjecture 1]{lacoinrhodesvargas22}. The purpose of this paper is precisely to prove this conjecture within the framework of the Gaussian BRW. In this context, the conjecture can be rephrased as follows: for $\beta\in(0,\beta_c)$, there exist constants $c,c',C,C'>0$ such that for all $x\geq0$  
\[
Ce^{-cx^\gamma } \leq \p(\zb<-x) \leq C'e^{-c'x^\gamma },
\]
where 
\[
\gamma\de   \left(\frac{\beta_c}{\beta}\right)^2.
\]
We give a proof for the lower bound in the whole phase $\beta\in(0,\beta_c)$ and one for the upper bound in the so-called $L^4$ phase where $\beta\in(0,\beta_c/2)$.\\

\subsection{Results}
We will divide the presentation of our results in two parts. First we give a lower bound on the left tail in the full regime $\beta \in (0, \beta_c)$.
\begin{thm}\label{lowerTh}
	For $\beta\in\left(0,\beta_c\right)$, there exist $c$ and $C>0$ such that for $x\geq0$
	\[
	\p(\zb<-x) \geq C\e^{-cx^\gamma }.
	\]
\end{thm}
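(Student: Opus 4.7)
The plan is to construct an explicit event of probability at least $Ce^{-cx^\gamma}$ on which we can force $\zb \leq -x$. The starting point is the $k$-level branching decomposition, obtained by iterating the one-step recursion: for any $k \geq 1$,
\[
\zb = \frac{1}{2^k}\sum_{|v|=k} \e^{\beta X_v - \beta^2 k/2}\bigl[(X_v - \beta k)\,\wb^{(v)} + \zb^{(v)}\bigr],
\]
where $(\wb^{(v)}, \zb^{(v)})_{|v|=k}$ are i.i.d.\ copies of $(\wb, \zb)$, independent of $\mathcal F_k$. I would take $k = k(x)$ such that $2^k \asymp x^\gamma$, i.e.\ $k \asymp 2(\log x)/\beta^2$, and then force the first $k$ generations of the BRW into a profile that makes $\zb$ of order $-\e^{\beta^2 k/2} \asymp -x$.

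Concretely, for a small constant $\ep > 0$, I would consider the event
\[
E_k := \bigl\{G_v \in [\beta-\ep,\beta] : \forall v,\ 1 \leq |v| \leq k\bigr\} \in \mathcal F_k.
\]
On $E_k$, every $|v|=k$ satisfies $X_v \in [(\beta-\ep)k,\beta k]$, so $X_v - \beta k \in [-\ep k, 0]$ and $\e^{\beta X_v - \beta^2 k/2} \in [\e^{\beta^2 k/2 - \beta\ep k},\,\e^{\beta^2 k/2}]$. Each summand is therefore non-positive, with absolute value of order $\e^{\beta^2 k/2}$ provided the subtree variables behave typically. By independence of the $G_v$'s and a lower bound on the Gaussian density near $\beta$, $\p(E_k) \geq (c_\ep \e^{-\beta^2/2})^{2^{k+1}-2} \geq \e^{-C_{\beta,\ep}\cdot 2^k} \geq \e^{-C x^\gamma}$, giving the desired probability budget.

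To convert $E_k$ into the actual bound $\zb \leq -x$, I would use a concentration event $F_k$ depending only on the subtree variables (hence independent of $\mathcal F_k$). Since $\es[\wb^{(v)}]=1$ and $\es[\zb^{(v)}]=0$, the $\mathcal F_k$-conditional expectation $\es[\zb \mid \mathcal F_k] = 2^{-k}\sum_v \e^{\beta X_v - \beta^2 k/2}(X_v-\beta k)$ is, on $E_k$, at most $-c\,\e^{\beta^2 k/2 - C'\ep k}$. A Chebyshev-type estimate exploiting the i.i.d.\ structure of the $2^k$ subtrees then yields $F_k$ with $\p(F_k)$ bounded away from $0$, on which $\zb$ stays close to this conditional mean; choosing $\ep$ small and $k$ as above produces $\zb \leq -x$ on $E_k \cap F_k$.

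The hard part will be the Chebyshev step: it naturally requires moment bounds on $\wb$ and $\zb$, which are clean only in the $L^2$ phase $\beta < \beta_c/\sqrt 2$. Outside this regime, where $\wb,\zb \notin L^2$, one must truncate the subtree variables at a scale depending on $k$ (and thus on $x$) and separately dispose of the rare large-deviation events caused by the tails of $\wb$ and $\zb$, so as to obtain concentration uniformly up to $\beta_c$ without losing the sharp exponent $x^\gamma$ in the probability of $E_k$.
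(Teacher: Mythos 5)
Your overall scheme --- take $k$ with $2^k \asymp x^\gamma$, force the first $k$ generations into a profile making $Z_k(\beta) \asymp -\e^{\beta^2 k/2}$, then control $Z_\beta - Z_k(\beta)$ on that event --- matches the paper's strategy. But the specific event you propose, $E_k = \{G_v \in [\beta-\ep,\beta],\ 1\le|v|\le k\}$ with $\ep$ \emph{fixed}, does not deliver the sharp exponent. Write $X_v = \beta k - y_v$ with $y_v\ge 0$; then
\[
Z_k(\beta) = -\frac{\e^{\beta^2 k/2}}{2^k}\sum_{|v|=k} y_v\,\e^{-\beta y_v}.
\]
The map $y\mapsto y\e^{-\beta y}$ is maximized at $y=1/\beta$, so you need $y_v = O(1)$ to be anywhere near the essential infimum $-m_k = -\frac{1}{\e\beta}\e^{\beta^2 k/2}$. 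But on $E_k$ the increments satisfy $\es[G_v - \beta\mid E_k]\approx -\ep/2$, hence $y_v\asymp\ep k$, and $y_v\e^{-\beta y_v}\asymp\ep k\,\e^{-c\ep k}\to 0$. Your own bound on the conditional mean, $-c\,\e^{\beta^2 k/2 - C'\ep k}$, shows this: the factor $\e^{-C'\ep k}$ is not a harmless loss, it is a genuine $o(1)$ that kills the scaling. To reach $-x$ you would have to enlarge $k$ to $k\approx\frac{2\log x}{\beta^2 - 2C'\ep}$, giving $2^k\asymp x^{\gamma'}$ with $\gamma' = \gamma\cdot\frac{\beta^2}{\beta^2-2C'\ep} > \gamma$, i.e.\ a lower bound $\e^{-cx^{\gamma'}}$ with the wrong (strictly larger) exponent. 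Shrinking $\ep$ to $O(1/k)$ to force $y_v=O(1)$ costs an extra $(\beta k)^{-2^{k+1}}$ in $\p(E_k)$, which is $\e^{-c\,2^k\log k}\asymp\e^{-c\,x^\gamma\log\log x}$, again losing the sharp exponent. This is exactly the phenomenon flagged in the paper's Remark after the heuristics: conditioning on increments (the ``i.i.d.\ view'') produces an extra $n$ in the exponent.

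The paper avoids this by conditioning directly on the \emph{positions} $(X_u)_{|u|=n}$ landing in the box $\Pi_n^\ep=[\beta n - \ap/\beta,\,\beta n - \am/\beta]^{2^n}$, and estimating the Gaussian integral against $\Sigma_n$. The crucial structural fact is that $\mathbf 1$ is nearly an eigenvector of $\Sigma_n$ with eigenvalue $\approx 2^n-1$, so $\ind^t\Sigma_n^{-1}\ind\approx 1$; hence moving the common mean to $\beta n - 1/\beta$ (the ``push'' along $\mathbf 1$) costs only $\e^{-O(n^2)}$, while the perpendicular $O(1)$ fluctuations and the normalization contribute the expected $\e^{-c 2^n}$. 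An increment-by-increment conditioning cannot see this cancellation. You should replace $E_k$ by a position-level box and exploit the covariance matrix (or, equivalently, use a change of measure that tilts the \emph{mean} of the positions rather than each increment separately). One smaller point: the Chebyshev/concentration step you worry about outside $L^2$ is handled in the paper via Marcinkiewicz--Zygmund in $L^p$ with $p\in[1,\gamma\wedge 2)$ (Lemma~\ref{MZ}), so that part of your concern is resolvable and does not require truncation.
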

\noindent
Second we give an upper bound on the left tail in the regime $\beta\in(0,\beta_c/2)$.

\begin{thm}\label{lowerThbis}
	For $\beta\in\left(0,\frac{\beta_c}{2}\right)$, there exist $c'$ and $C'>0$ such that for $x\geq0$
	\[
	\p\left(\zb<-x\right) \leq C'\e^{-c'x^\gamma }.
	%\limsup\limits_{x\to\infty}\frac{\log\p(Z_\beta<-x)}{x^\gamma}<0.
	\]
\end{thm}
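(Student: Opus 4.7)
The plan is to combine the branching recursion at a carefully chosen depth $n=n(x)$ with the $L^4$-integrability of $W_\beta$ (which holds because $\beta<\beta_c/2$ forces $\gamma>4$) and a conditional concentration inequality for weighted sums of i.i.d.\ copies. Writing $w_u := 2^{-n}e^{\beta X_u-\beta^2 n/2}$ and using independent copies $(W_\beta^{(u)},Z_\beta^{(u)})_{|u|=n}$ of $(W_\beta,Z_\beta)$ independent of $\mathcal F_n$, we start from the generation-$n$ decomposition
\[
Z_\beta = Z_n(\beta) + R_n, \qquad R_n := \sum_{|u|=n} w_u \bigl[(X_u-\beta n)(W_\beta^{(u)}-1) + Z_\beta^{(u)}\bigr],
\]
where $\mathbb E[R_n \mid \mathcal F_n]=0$. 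The event $\{Z_\beta<-x\}$ implies $\{Z_n(\beta)<-x/2\}\cup\{R_n<-x/2\}$, and the task reduces to bounding these two events by $\exp(-c'x^\gamma)$.

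For the fluctuation $R_n$, I would condition on $\mathcal F_n$, truncate each summand at a level $M=M(x)$ to tame the polynomial right tails of $W_\beta^{(u)}$ (whose fourth moment is finite by assumption), and apply Bennett's inequality on the truncated variables. This should produce a conditional bound of the form $\exp(-cx^2/\Sigma_n)$ on a high-probability BRW event, where $\Sigma_n$ is the conditional variance and has typical magnitude $e^{-n(\log 2-\beta^2)}$. Using the identity $\log 2-\beta^2=\beta^2(\gamma-2)/2$, the choice $n\asymp (2/\beta^2)\log x$ drives $x^2/\Sigma_n$ to order $x^\gamma$ as required. The truncation error is absorbed via Markov's inequality on the fourth moment of $W_\beta$, and the "atypical BRW" event at level $n$ is handled by classical large-deviation estimates, themselves exponentially small in $n$ and hence in $\log x$.

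For the prediction $Z_n(\beta)$, only polynomial moments are available a priori --- by Cauchy's integral formula applied to Biggins' analytic extension of $W_n(\cdot)$, one has $\|Z_n(\beta)\|_p\le C$ uniformly in $n$ for each $p<\gamma$, giving only the weak bound $Cx^{-p}$. Upgrading this to $\exp(-c'x^\gamma)$ is the harder half of the proof, and the plan is a bootstrap built on the one-step recursion $Z_\beta \stackrel{(d)}{=} \sum_{i=0,1} A_i \bigl[(G_i-\beta) W_\beta^{(i)} + Z_\beta^{(i)}\bigr]$ with $A_i=\tfrac{1}{2} e^{\beta G_i-\beta^2/2}$: since $\mathbb E[A^p]=2^{-p}e^{\beta^2 p(p-1)/2}<1$ precisely when $p<\gamma$, each iteration contracts a polynomial-tail ansatz, and after transitioning to a stretched-exponential (or log-normal) ansatz and iterating again, one should converge to a fixed point matching the exponent $\gamma=(\beta_c/\beta)^2$. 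The main technical obstacle is closing this bootstrap cleanly: the naive one-generation iteration does not by itself improve a polynomial input to a super-polynomial output, so a joint treatment of many generations exploiting the Gaussian structure of the $G_v$'s will be needed, likely via a sharp saddle-point analysis of Laplace integrals of the form $\mathbb E[\exp(-c x^\gamma/A^\gamma)]$.
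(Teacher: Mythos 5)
Your decomposition $Z_\beta = Z_n(\beta) + R_n$ is a legitimate starting point, and you correctly identify the right scale $n\asymp(2/\beta^2)\log x$ and the role of the conditional variance $\Sigma_n\approx e^{-n(\log 2-\beta^2)}$. But the proposal has a serious circularity that prevents it from closing, and it misses the key technical input that the paper relies on.

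First, the half you call ``harder''---bounding $\mathbb P(Z_n(\beta)<-x/2)$---is in fact trivial: $Z_n(\beta)$ has essential infimum $-m_n=-\tfrac{1}{\beta e}e^{\beta^2n/2}$ (since $y\mapsto y\,e^{\beta y}$ is bounded below by $-1/(\beta e)$), so for $n$ chosen with $m_n\le x/2$ this probability is identically zero; no bootstrap from Biggins' analyticity or moment bounds is needed. All the difficulty is concentrated in $R_n$, and there the proposal does not work as stated. The summands of $R_n$ are $w_u\bigl[(X_u-\beta n)(W_\beta^{(u)}-1)+Z_\beta^{(u)}\bigr]$; for a \emph{lower}-tail concentration inequality (Bennett, Bernstein, or any of their variants) you need a one-sided bound on these summands from below, i.e.\ an a priori control on the \emph{left} tail of $Z_\beta^{(u)}$---which is exactly what the theorem is supposed to prove. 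Truncating to ``tame the polynomial right tails of $W^{(u)}$'' does not help with the left tail of $Z^{(u)}$, and the truncation error there can only be bounded by the polynomial $\mathbb P(Z<-M)\lesssim M^{-p}$ you already have; this is enormously larger than the target $e^{-cx^\gamma}$, so the argument cannot close by a naive truncate-and-iterate bootstrap.

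The paper cuts this Gordian knot by proving, before any Chernoff bound, the uniform sub-Gaussian estimate $\mathbb E[e^{-\lambda Z}]\le e^{C\lambda^2}$ for $\lambda\ge 0$ (Proposition \ref{subG}). That estimate cannot be obtained from moments or from the one-step recursion alone; it is established by passing to a continuous-time binary branching Wiener process, stopping excursions above a well-chosen line, and controlling the brackets of the resulting continuous martingales by stochastic calculus, following \cite{lacoinrhodesvargas22}. Once this sub-Gaussianity is available, the paper does not split $Z$ into $Z_n+R_n$ but keeps the full branching form $Z=\sum_{|u|=n}w_u\bigl(Z^u+(X_u-\beta n)W^u\bigr)$, partitions the particles by position ($X_u<\beta n$, $\beta n\le X_u\le\beta n+1$, $X_u>\beta n+1$), and for each regime uses a tailored Laplace-transform bound on $Z+aW$: an exponential-tilting identity reducing $Z+aW$ to a shifted copy $Z^{[a]}$ (Lemma \ref{ZW}, sharpened in Lemma \ref{highlap} via Hölder), a uniform bound over $a\le 0$ coming from the same continuous-martingale analysis (Proposition \ref{lowlap}), and for high particles Nikula's small-deviation estimate $\mathbb E[e^{-\lambda W}]\le e^{-c\log^{3/2}\lambda}$ to pick up the extra decay. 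None of these ingredients appear in your proposal, and the proposal as written cannot supply the necessary one-sided tail control without them.
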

\noindent
We will make two remarks on the limiting cases $\beta=0$ and $\beta= \beta_c$ where much more is known and the behaviour of the left tail is quite different.
\begin{rem}
In the case $\beta = 0$, one can check that $Z_0=\lim\frac{1}{2^n}\sun X_u$ is a standard Gaussian random variable. 
\end{rem}

\begin{rem}
The case $\beta =  \beta_c$ has been thoroughly studied in the literature. In this case, the variable $W_{\beta_c}$ is trivial; however $Z_{\beta_c}$ is non trivial, negative almost surely and the left tail follows the power law
\[
	 \quad \p(Z_{\beta_c}<-x) \sim \frac{C}{x},\quad\text{ as } x\rightarrow+\infty,
	%\limsup\limits_{x\to\infty}\frac{\log\p(Z_\beta<-x)}{x^\gamma}<0.
	\]
for some $C>0$, see \cite{buraczewski2009}.
\end{rem}

%%%%%%%%%%%%%%%%%%%%%%%%%%%%%%%%%%%%%%

\section{Lower bound for the left tail: proof of Theorem \ref{lowerTh}}

%%%%%%%%%%%%%%%%%%%%%%%%%%%%%%%%%%%%%%

This section is devoted to proving Theorem \ref{lowerTh}. 
\subsection{Heuristics}

A quick inspection of $Z_n(\beta)$ as a function of $(X_u)_{|u|=n}$, namely
\[
Z_n(\beta)=\frac{1}{2^n} \sun e^{\beta X_u-\frac{\beta^2}{2} n }\big(X_u-\beta n\big),
\]
reveals that
\[
\text{ess}\inf Z_n(\beta)=-\frac{1}{\be \e} \e^{\frac{\beta^2}{2}n}=:-m_n.
\]
This value is achieved when all the particles are located at $\beta n-\frac{1}{\beta}$. This suggests the following scenario to get a lower bound. Put all the particles around $\beta n-\frac{1}{\beta}$ to minimize $Z_n(\beta)$ and then show that, on this event, the rest $Z_\beta-Z_n(\beta)$ is negligible using the decomposition of the derivative martingale
\[
\zb = \znb+R_n(\beta),
\]
with
\begin{equation}\label{decomp}
R_n(\beta)=\frac{1}{2^n} \sun e^{\beta X_u-\frac{\beta^2}{2}n} \zb^u+\frac{1}{2^n} \sun e^{\beta X_u-\frac{\beta^2}{2}n}  \big(X_u-\beta n\big) \big(\wb^u-1\big)
\end{equation}
and $(\zb^u,\wb^u)$ are independent copies of $(Z_\be,W_\be)$ independent of $\mathcal F_n$.\\

\noindent
To see why this works, suppose for instance that we are in the $L^2$ phase ($\beta<\beta_c/\sqrt2$). In this case, $Z^u$ and $W^u-1$ are centered with  finite variance, thus the central limit theorem ensures that $\sun Z_\beta^u$ fluctuates as $2^{n/2}$. If the $X_u$ are all located near $\beta n-\frac{1}{\beta}$, we have
\[
\frac{1}{2^n}\sun e^{\beta X_u-\frac{\beta^2}{2}n}  \zb^u ~\simeq~ \left( \frac{\e^{\bh }}{\sqrt 2}\right) ^n\frac{1}{2^{\frac{n}{2}}}\sun \zb^u
\]
which is exponentially small thanks to the condition $\beta< \beta_c$.\\
The cost for placing all the particles around $\beta n-\frac{1}{\beta}$ is of order $\exp(-2^n)$ as we will see and noting that $2^n = \left(\e^{\frac{\beta^2}{2}n}\right)^\gamma$ shows that we obtain the expected order. 
\begin{rem}
The probability that all the particles are near $\be n-\frac{1}{\be}$ at time $n$ in the i.i.d. case is
\begin{align*}
\p\Big(\mathcal N(0,n) = \be n-\frac{1}{\be}+O(1) \Big)^{2^n}
& = \e^{-\bh n2^n+ o(n2^n)}.
\end{align*}

We will see in the proof that the extra $n$ term in the exponential comes from the i.i.d. assumption and it will disappear for the correlated process $(X_u, |u|=n)$.
\end{rem}
\medskip
\noindent
Following the above heuristic, we start by giving a lower bound for the left tail of $\znb$ and then a lower bound for the left tail of $\zb$ by showing that  $\znb$ and  $\zb$ are close enough when the aforementionned scenario occurs.

\subsection{Lower bound for the left tail of $\znb$}\label{boxsce}

The process $(X_u, |u|=n)$ is a Gaussian process with covariance given by
$\text{Cov}(X_u,X_v)=u\wedge v$, where $u\wedge v$ is the length of the last common ancestor of $u$ and $v$. By ordering the $n$-th generation with the lexicographical order, this covariance gives a covariance matrix $\Sigma_n\in\mathcal M_{2^n}(\mathbb R)$. The first ones are
\[
\Sigma_1 = I_2,~~
\Sigma_2 = \begin{pmatrix} 2 & 1 & 0 & 0 \\ 1 & 2 & 0 & 0 \\ 0 & 0 & 2 & 1 \\ 0 & 0 & 1 & 2 \\ \end{pmatrix},~~
%\Sigma_3 = \begin{pmatrix} 3 & 2 & 1 & 1 & 0 & 0 & 0 & 0 \\ 2 & 3 & 1 & 1 & 0 & 0 & 0 & 0 \\ 1 & 1 & 3 & 2 & 0 & 0 & 0 & 0 \\ 1 & 1 & 2 & 3 & 0 & 0 & 0 & 0 \\ 0 & 0 & 0 & 0 & 3 & 2 & 1 & 1 \\ 0 & 0 & 0 & 0 & 2 & 3 & 1 & 1 \\ 0 & 0 & 0 & 0 & 1 & 1 & 3 & 2 \\ 0 & 0 & 0 & 0 & 1 & 1 & 2 & 3 \\ \end{pmatrix},~~
\Sigma_{n+1} = \left (\begin{array}{c|c}
J_n+\Sigma_n & 0 \\
\hline
0 & J_n+\Sigma_n
\end{array}\right),
\]
where $J_n\in\mathcal M_{2^n}(\mathbb R)$ have all entries equal to $1$. The recurrence relation above enables to compute the determinant of $\Sigma_n$ which is $(2^n-1) \prod_{k=0}^{n-1}(2^{n-k}-1)^{2^k}$. And the terms in the product are exactly the eigenvalues of $\Sigma_n$ as can be seen by using the same recursion for the characteristic polynomials.\\

\noindent
Now, recall that $-m_n=-\frac{1}{\e \beta}\exp\left(\bh n\right)$ is the essential infimum of $\znb$ and for $\ep\in(0,1)$, let
\[
\vne \coloneqq \left\{x=(x_u)_{|u|=n}\in\re^{2^n} : \frac{1}{2^n}\sun \e^{\beta x_u-n \frac{\beta^2}{2}}(x_u-\beta n)<-(1-\varepsilon)m_n \right\},
\]
so that the following equality holds:
\begin{align*}
\p\left(\znb<-(1-\varepsilon)m_n\right)
&=\p\big((X_u)_{|u|=n}\in\vne\big)\\
&=\frac{1}{\sqrt{2\pi}^{\,2^n}\sqrt{\det \Sigma_n}} \int_{\vne} \e^{-\frac{1}{2}x^t\Sigma_n^{-1}x} \dx.
\end{align*}
If $0<\am<1<\ap$ are the two solutions of the equation $\alpha\e^{-\alpha} = \frac{1}{\e}(1-\varepsilon)$, it is not hard to see that
$\pne\coloneqq\left[\beta n-\frac{\ap}{\be}, \beta n-\frac{\am}{\be}\right]^{2^n}\subset \vne$. Then
\begin{align*}
\p\big(\znb<-(1-\varepsilon)m_n\big) &\geq\frac{1}{\sqrt{2\pi}^{\,2^n}\sqrt{\det \Sigma_n}} \int_{\pne} \e^{-\frac{1}{2}x^t\Sigma_n^{-1}x} \dx\\
&\geq\frac{1}{\sqrt{2\pi}^{\,2^n}\sqrt{\det \Sigma_n}} \,\lambda\big(\pne\big)\, \exp\left(-\frac{1}{2}\,\sup_{x \in \pne} x^t\Sigma_n^{-1}x\right).
\end{align*}
It is an exercise to show that $\log \det \Sigma_n = \theta 2^n-2\log 2+o(1)$ for some positive $\theta\simeq 0.9458$, the computations are done in Lemma \ref{det}.
Notice also that the biggest eigenvalue of $\Sigma_n^{-1}$ is $1$. Therefore if $x\in\pne$, by writing $x=\left(\beta n-\frac{1}{\beta}\right)\ind+h$ where $\ind$ is the vector with all coordinates equal to one, one has
\begin{align*}
x^t\Sigma_n^{-1}x&=\left(\beta n-\frac{1}{\beta}\right)^2 \ind^t\Sigma_n^{-1}\ind+2\left(\beta n-\frac{1}{\beta}\right)\ind^t\Sigma_n^{-1}h+h^t\Sigma_n^{-1}h\\
&=\left(\beta n-\frac{1}{\beta}\right)^2\frac{2^n}{2^n-1}+2\left(\beta n-\frac{1}{\beta}\right)\frac{1}{2^n-1}\sum _i h_i+h^t\Sigma_n^{-1}h\\
&\leq\left(\frac{\ap-1}{\beta}\right)^2 2^n+O(n^2),
\end{align*}
where we used the fact that $\ap-1>1-\am$.
And one obtains
\begin{align*}
\p\big(\znb<-(1-\varepsilon)m_n\big) &\geq\e^{-\kappa_\varepsilon 2^n +O(n^2)},
\end{align*}
where $\kappa_\varepsilon\coloneqq \frac{1}{2}\left(\log 2\pi+\theta\right)-\log\frac{\ap-\am}{\beta}+\frac{1}{2}\left(\frac{\ap-1}{\be}\right)^2$.\\

\noindent
Taking for instance $\varepsilon = 1/2$ gives the existence of a constant $\lambda>0$ such that
\[
\liminf\limits_{n\rightarrow\infty}~\frac{\log \p\big(\znb<-0.5m_n\big)}{2^n}\geq -\lambda.
\]

\subsection{Lower bound for the left tail of $\zb$}

We now show that $\znb$ is a good approximation of $\zb$ when estimating the left tail below a fraction of $m_n$.
Fix $\delta>0$, and let us write 
\begin{align*}
\p\Big(\big|&Z_\be-\znb\big|\geq \delta\big|\znb\big|\,\Big|\, \fn\Big)\\
&=\p\left(\bigg| \frac{1}{2^n}\sun \e^{\beta X_u- \frac{\beta^2}{2}n }   \,\zb^u+  \frac{1}{2^n} \sun \e^{\beta X_u- \frac{\beta^2}{2}n } \big(X_u-\beta n\big) \big(\wb^u-1\big)\bigg|\geq \delta|\znb| \,\bigg|\, \fn\right)\\
&\leq \p\bigg(\bigg|  \frac{1}{2^n} \sun \e^{\beta X_u- \frac{\beta^2}{2}n }    \,\zb^u\bigg|\geq \frac{\delta|\znb|}{2} \,\bigg|\, \fn\bigg)\\&+\p\bigg(\bigg| \frac{1}{2^n}  \sun \e^{\beta X_u- \frac{\beta^2}{2}n }   \big(X_u-\beta n\big) \big(\wb^u-1\big)\bigg|\geq \frac{\delta|\znb|}{2} \,\bigg|\, \fn\bigg)\\
&\leq \left(\frac{2}{\delta|\znb|}\right)^p B_p \bigg(\es|\zb|^p  \frac{1}{2^{np}} \sun \e^{p \beta X_u- \frac{p \beta^2}{2}n }   +\es|\wb-1|^p \frac{1}{2^{np}} \sun \e^{p \beta X_u- \frac{p \beta^2}{2}n } |X_u-\be n|^p  \bigg),
\end{align*}
using Lemma \ref{MZ} for the last inequality with some $p\in[1,\gamma\wedge2)$.
On the event $\left\{ (X_u)_{|u|=n}\in\pne \right\}$, we have
\[
|Z_n(\beta)|\geq\frac{1-\ep}{\e\beta}\,\e^{\frac{\beta^2}{2}n},
\]
\[
\frac{1}{2^{np}} \sun  e^{p \beta X_u- \frac{p \beta^2}{2}n }    \leq \e^{-p\am}\frac{1}{2^{(p-1)n}}\,\e^{p\bh n},
\]
\[
\frac{1}{2^{np}} \sun e^{p \beta X_u- \frac{p \beta^2}{2}n }    |X_u-\be n|^p \leq \bigg(\frac{\e^{-\am}\ap}{\be}\bigg)^p\frac{1}{2^{(p-1)n}}\,\e^{p\bh n}.
\]
Therefore,
\begin{align}\label{ineg}
\p\Big(\big|Z_\be-\znb\big|\geq \delta|\znb|\,\Big|\, \pne\Big)&\leq \bigg(\frac{2\e\be\e^{-\am}}{\delta(1-\ep)}\bigg)^p B_p \left( \es[|\zb|^p]+\es[|\wb-1|^p]\left(\frac{\ap}{\be}\right)^p \right)\frac{1}{2^{(p-1)n}}.
\end{align}
And the right term decays exponentially to $0$ as soon as $p>1$. Then, on $\pne\cap\{|Z_\be-\znb|\leq \delta|\znb|\}$, one has

\[
Z_\be\leq-(1-(\delta+\ep))m_n,
\]
thus
\begin{align*}
\p\left(Z_\be\leq-(1-(\delta+\ep))m_n\right)&\geq\p((X_u)\in\pne)\,\p\left(|Z_\be-\znb|\leq \delta|\znb|\,\big|\,(X_u)\in\pne\right)\\
&\geq \e^{-\kappa_\ep2^n+O(n^2)}\left(1-\frac{C(\ep,\delta)}{2^{(p-1)n}}\right),
\end{align*}
where $C(\ep,\delta)$ is the term in the r.h.s of Equation (\ref{ineg}).
This last inequality and the continuity of $\kappa_\ep$ for $\ep\in(0,1)$ lead to
\[
\liminf\limits_{n\to\infty} \frac{\log\p(\zb\leq-(1-\ep)m_n)}{2^n}\geq -\kappa_\ep.
\]
And therefore,
\[
\liminf\limits_{x\to\infty} \frac{\log\p(\zb\leq-x)}{x^\gamma}> -\infty
\]
which concludes the proof of Theorem \ref{lowerTh}.

%%%%%%%%%%%%%%%%%%%%%%%%%%%%%%%%%%%%%%

\section{Upper bound for the left tail}

%%%%%%%%%%%%%%%%%%%%%%%%%%%%%%%%%%%%%%

\subsection{Large deviations with the branching property}

From now on, we omit the subscript $\beta$ to alleviate notations. The branching property gives the decomposition
\begin{align}\label{brch}
Z &=  \frac{1}{2^{n}} \sun  \e^{ \beta X_u- \frac{ \beta^2}{2}n }   \big(Z^u +(X_u-\beta n)W^u\big).
\end{align}

In order to obtain large deviations bounds, we need to control the Laplace transform of the random variables $Z+aW$. Intuitively, for $a>0$, the fat right tail of $W$ should help. For the same reason, it seems hopeless at first sight to obtain a nice bound in the case $a<0$ . But there must be some compensation with $Z$ because a typical scenario for a large value for $W$ leads to a positive large value for $Z$ too (the box scenario of Subsection \ref{boxsce} being very atypical). Let us rewrite things a bit:
\begin{align*}
	Z_n+aW_n &= \frac{1}{2^{n}} \sun \e^{ \beta X_u- \frac{ \beta^2}{2}n }   \left(X_u-\beta n+a\right)\\
	&= \e^{-\beta a} \frac{1}{2^{n}}  \sun \e^{\beta (X_u+a)- \frac{ \beta^2}{2}n}((X_u+a)-\beta n).
\end{align*}
Therefore $Z+aW = \e^{-\beta a} Z^{[a]}$ where $Z^{[a]}$ is the limit of the derivative martingale when the initial ancestor starts at $a$. Since the large negative values for $Z_n$ are obtained when all the particles lie around $\beta n$ at time $n$, it seems reasonable to expect that starting from $a<0$ makes it harder to achieve. In fact, by exponential tilting, we can shift the mean of the $X_u$'s and obtain a bound on the Laplace transform of $Z^{[a]}$ with the Laplace transform of $Z$.
\begin{lem}\label{ZW}
	For $a\in\mathbb R$ and $\lambda>0$, ones has
	\[
	\es\left[\e^{-\lambda(Z+aW)}\right]\leq \e^{\frac{a^2}{2}} \,\es\left[\e^{-2\lambda\e^{-\beta a} Z}\right]^{\frac{1}{2}}.
	\]
\end{lem}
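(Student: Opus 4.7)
The plan is to exploit the identity $Z + aW = \e^{-\beta a} Z^{[a]}$ established just before the lemma ($Z^{[a]}$ being the derivative martingale of the BRW issued from position $a$). Setting $\mu := \lambda\e^{-\beta a}$, the inequality is equivalent to
\[
\es\bigl[\e^{-\mu Z^{[a]}}\bigr] \,\leq\, \e^{a^2/2}\, \es\bigl[\e^{-2\mu Z}\bigr]^{1/2}.
\]
I would prove this by realising $Z^{[a]}$ as a Cameron--Martin tilt of $Z$ and then applying Cauchy--Schwarz to the resulting identity.

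The obvious tilt, shifting only the two first-generation Gaussians $G_1, G_2$ by $a$, has Cameron--Martin norm $2a^2$ and yields only the weaker prefactor $\e^{a^2}$. To recover the optimal $\e^{a^2/2}$, the shift has to be diffused across all generations. The natural choice is $c_v = a\cdot 2^{-|v|}$ for every $v\neq\emptyset$, which is in $\ell^2$ with
\[
\|c\|^2 = \sum_{k\geq 1} 2^k \bigl(a/2^k\bigr)^2 = a^2,
\]
and such that any root-to-leaf path accumulates a shift $\sum_{k=1}^{m} a/2^k = a(1-2^{-m})$ tending to $a$. Under the tilted law in which $G_v\sim\mathcal N(c_v,1)$ independently, the same computation as in the paragraph preceding the lemma (with $a$ replaced by $a(1-2^{-m})$) shows that the $m$-th derivative martingale equals $\e^{\beta a(1-2^{-m})}\bigl(Z_m + a(1-2^{-m})W_m\bigr)$ in terms of the original (untilted) BRW; this converges almost surely to $\e^{\beta a}(Z + aW) = Z^{[a]}$. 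Hence the law of $Z^{[a]}$ under $\p$ coincides with that of $Z$ under the tilted measure.

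The Cameron--Martin formula then gives
\[
\es\bigl[\e^{-\mu Z^{[a]}}\bigr] = \es\!\left[\e^{-\mu Z}\,\exp\!\left(\sum_v c_v G_v - \tfrac{a^2}{2}\right)\right],
\]
and Cauchy--Schwarz combined with the Gaussian computation $\es[\e^{2\sum_v c_v G_v}] = \e^{2\|c\|^2} = \e^{2a^2}$ delivers the announced bound. The only technical point is justifying the Cameron--Martin identity in infinite dimension, which I would handle by truncating the tilt to $\{|v|\leq n\}$ using the Lagrange-optimal coefficients $c_v^{(n)} = a\cdot 2^{-|v|}/(1-2^{-n})$ of squared norm $a^2/(1-2^{-n})$, applying finite-dimensional Girsanov for each $n$, and then passing to the limit $n\to\infty$ (the left-hand side being independent of $n$).
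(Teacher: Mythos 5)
Your approach is the same as the paper's: your truncated tilt $c_v^{(n)}=a\cdot 2^{-|v|}/(1-2^{-n})$ is exactly the paper's change of measure $Y_n^{[a]}=\exp\big(\tfrac{a}{(1-2^{-n})2^n}\sum_{|u|=n}X_u-\tfrac{a^2}{2(1-2^{-n})}\big)$, since $\tfrac{1}{2^n}\sum_{|u|=n}X_u=\sum_{1\le|v|\le n}2^{-|v|}G_v$, and the Cauchy--Schwarz step is identical. The only organizational difference is that the paper applies Cauchy--Schwarz at fixed $n$ and then lets $n\to\infty$ via Fatou plus conditional Jensen (using $Z_n=\es[Z\mid\mathcal F_n]$), whereas you pass to the limit in the tilt before applying Cauchy--Schwarz; both are valid.
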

\begin{proof}
	Recall that $(X_u)_{|u|=n}$ is a centered Gaussian vector with covariance matrix $\Sigma_n$ and note that $\frac{1}{2^n}\sun X_u \sim\mathcal N(0,1-2^{-n})$. Under the probability $\mathbb Q_a$ defined by
	\[
	\dfrac{\mathrm d \mathbb Q_a}{\mathrm d \p} = Y_n^{[a]}\de \e^{\frac{a}{(1-2^{-n})2^n}\sun X_u-\frac{a^2}{2(1-2^{-n})}},
	\]
	the variables $(X_u)_{|u|=n}$ are shifted: we have $\es_{\mathbb Q_a}[X_u]=a$ and the covariance matrix remains unchanged. Therefore,
	\begin{align*}
		\es\left[\e^{-\lambda Z_n^{[a]}}\right]&=\es\left[\e^{-\lambda  \frac{1}{2^n}\sun\e^{\beta(X_u+a)-\frac{\beta^2}{2}n}(X_u+a-\beta n)}\right]\\
		&=\es\left[Y_n^{[a]} \,\e^{-\lambda \frac{1}{2^n} \sun \e^{\beta X_u-\frac{\beta^2}{2}n}(X_u-\beta n)}\right]\\
		&=\e^{-\frac{a^2}{2(1-2^{-n})}}\,\es\left[\e^{\frac{a}{(1-2^{-n})2^n}\sun X_u}\e^{-\lambda \frac{1}{2^n} \sun \e^{\beta X_u-\frac{\beta^2}{2}n}(X_u-\beta n)   }\right]\\
		&\leq\e^{-\frac{a^2}{2(1-2^{-n})}}\,\es\left[\e^{\frac{2a}{(1-2^{-n})2^n}\sun X_u}\right]^{\frac{1}{2}}\,\es\left[\e^{-2\lambda  \frac{1}{2^n} \sun \e^{\beta X_u-\frac{\beta^2}{2}n}(X_u-\beta n) }\right]^{\frac{1}{2}}\\
		&=\e^{\frac{a^2}{2(1-2^{-n})}}\,\es\left[\e^{-2\lambda Z_n}\right]^{\frac{1}{2}}.
	\end{align*}
	Then, applying Fatou's lemma and conditional Jensen's inequality yields
	\begin{align*}
		\es\left[\e^{-\lambda Z^{[a]}}\right]\leq\liminf\limits_{n\to\infty}\e^{\frac{a^2}{2}}\es\left[\e^{-2\lambda\es\left[Z|\mathcal F_n\right]}\right]^\frac{1}{2}\leq\e^{\frac{a^2}{2}}\es\left[\e^{-2\lambda Z}\right]^\frac{1}{2}.
	\end{align*}
	Finally, using $Z+aW = \e^{-\beta a} Z^{[a]}$ gives
	\[
	\es\left[\e^{-\lambda(Z+aW)}\right]=\es\left[\e^{-\lambda \e^{-\beta a} Z^{[a]}}\right]\leq \e^{\frac{a^2}{2}}\,\es\left[\e^{-2\lambda\e^{-\beta a} Z}\right]^{\frac{1}{2}}.
	\]
\end{proof}

For high particles and under the sub-Gaussian assumption\footnote{Here we think of sub-Gaussian on the left, meaning that a random variable $V$ is sub-Gaussian if there exist $c$ and $C>0$ such that for $\lambda\geq0$, we have $\es\e^{-\lambda V}\leq C\e^{c\lambda^2}$. This will be proved in Proposition \ref{subG}.}, we have the more refined lemma:
\begin{lem}\label{highlap}
	For $a>0$ and $\lambda\geq0$, we have
	\[
	\es\left[\e^{-\lambda (Z+aW)}\right]\leq
	\e^{C(\lambda\e^{-\beta a} a+(\lambda \e^{-\beta a})^2)}.
	\]
\end{lem}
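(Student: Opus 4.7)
The plan is to refine Lemma~\ref{ZW} by replacing Cauchy--Schwarz with H\"older's inequality at conjugate exponents $p, q > 1$ and then optimizing over $q$; this replaces the problematic $\e^{a^2/2}$ factor by an exponent growing only linearly in $a$.

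Since $Z+aW=\e^{-\beta a}Z^{[a]}$, it suffices to bound $\es[\e^{-\mu Z^{[a]}}]$ with $\mu=\lambda\e^{-\beta a}$. Following the Girsanov step of Lemma~\ref{ZW}, one has $\es[\e^{-\mu Z_n^{[a]}}] = \es_\p[Y_n^{[a]}\e^{-\mu Z_n}]$, and H\"older then gives
\[
\es[\e^{-\mu Z_n^{[a]}}] \leq \|Y_n^{[a]}\|_p\,\|\e^{-\mu Z_n}\|_q ,
\]
where a direct Gaussian computation (as in Lemma~\ref{ZW}) yields $\|Y_n^{[a]}\|_p = \e^{a^2(p-1)/(2(1-2^{-n}))}$. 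By Proposition~\ref{subG} (sub-Gaussianity of $Z$), and after absorbing the multiplicative constant into the quadratic coefficient---which is possible because $\es[Z] = 0$ and $\es[Z^2] < \infty$ force $\log \es[\e^{-sZ}] = O(s^2)$ near $s=0$, and this combines with the global sub-Gaussian estimate to give a bound of the form $\e^{cs^2}$---we get $\es[\e^{-q\mu Z}]^{1/q} \leq \e^{cq\mu^2}$. Passing to the limit $n\to\infty$ by Fatou and conditional Jensen (exactly as in the proof of Lemma~\ref{ZW}), and using $p-1 = 1/(q-1)$, we obtain
\[
\es[\e^{-\mu Z^{[a]}}] \leq \exp\!\left(\frac{a^2}{2(q-1)} + cq\mu^2\right) .
\]

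The right-hand side is minimized over $q > 1$ at $q^* = 1 + a/(\mu\sqrt{2c})$, which is well-defined for $a, \mu > 0$; a short calculation shows the minimum value equals $\sqrt{2c}\,\mu a + c\mu^2$. Setting $C = \max(\sqrt{2c}, c)$ therefore gives
\[
\es[\e^{-\lambda(Z+aW)}] \leq \e^{C(\lambda\e^{-\beta a} a + (\lambda\e^{-\beta a})^2)} ,
\]
as desired.

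The main technical subtlety is the removal of the multiplicative constant in the sub-Gaussian bound, so that the final estimate equals $1$ at $\lambda=0$; this rests on the local quadratic behaviour of the log-Laplace transform of $Z$ around the origin. The rest is a routine application of H\"older's inequality combined with one-variable optimization.
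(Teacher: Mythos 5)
Your proof is correct and follows essentially the same route as the paper's: Girsanov change of measure, H\"older in place of Cauchy--Schwarz, the sub-Gaussian bound of Proposition~\ref{subG}, and passage to the limit by Fatou and conditional Jensen, with the only difference being that you optimize the H\"older exponent exactly (obtaining $q^* = 1 + a/(\mu\sqrt{2c})$) while the paper simply picks $p = 1 + \lambda/a$, which already yields the stated order. Your final worry about a multiplicative prefactor is a non-issue, since Proposition~\ref{subG} is already stated in the pure form $\es[\e^{-\lambda Z}]\leq\e^{C\lambda^2}$ with no constant in front.
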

\begin{proof}
With the sub-Gaussian result of Proposition \ref{subG}, we can sharpen the previous lemma using H\"older's inequality instead of Cauchy-Schwarz in the proof of Lemma \ref{ZW} . If $a>0$, one has
\begin{align*}
	\es\left[\e^{-\lambda Z^{[a]}}\right]&\leq \e^{(p-1)\frac{ a^2}{2}}\,\es\left[\e^{-\frac{p}{p-1}\lambda Z}\right]^{\frac{p-1}{p}}\\
	&\leq
	\e^{(p-1)\frac{ a^2}{2}+C\frac{p}{p-1}\lambda^2}.
\end{align*}
By choosing $p = 1+\frac{\lambda}{a}$, one gets
\[
\es\left[\e^{-\lambda Z^{[a]}}\right]\leq
\e^{C(\lambda a+\lambda^2)},
\]
for some other constant $C>0$. Therefore
\begin{equation*}\label{ZW2}
	\es\left[\e^{-\lambda (Z+aW)}\right]\leq
	\e^{C(\lambda\e^{-\beta a} a+(\lambda \e^{-\beta a})^2)}.\\
\end{equation*}
\end{proof}

\noindent
We now turn to the proof of Theorem \ref{lowerThbis}.

\vspace{0.2 cm}

\noindent
\emph{Proof of Theorem \ref{lowerThbis}} :
	We start with representation (\ref{brch}) and apply a Chernoff bound to the three following terms
	\begin{align*}
		Z &=  \frac{1}{2^{n}} \sum_{X_u<\beta n}   \e^{\beta X_u-\frac{\beta^2}{2}n} \big(Z^u +(X_u-\beta n)W^u\big)\\
		&+  \frac{1}{2^{n}} \sum_{X_u\in[\beta n,\beta n+1]}  \e^{\beta X_u-\frac{\beta^2}{2}n} \big(Z^u +(X_u-\beta n)W^u\big)\\&+  \frac{1}{2^{n}} \sum_{X_u>\beta n+1}  \e^{\beta X_u-\frac{\beta^2}{2}n} \big(Z^u +(X_u-\beta n)W^u\big).
	\end{align*}
From now on, fix some $\alpha>0$ whose value will be determined later.\\

For the particles below $\beta n$, we have, for $\lambda>0$,
\begin{align*}
	\p\Bigg( \frac{1}{2^{n}} \sum_{X_u<\beta n}&  \e^{\beta X_u-\frac{\beta^2}{2}n} \big(Z^u +(X_u-\beta n)W^u\big)<-\alpha\e^{\frac{\beta^2}{2}n}\,\bigg|\, \mathcal F_n\Bigg)  \\
	&=\es\left[\exp\left(-\lambda \frac{1}{2^{n}} \sum_{X_u<\beta n}  \e^{\beta X_u-\frac{\beta^2}{2}n}  \big(Z^u +(X_u-\beta n)W^u\big)\right)>\exp\left(\lambda\alpha\e^{\frac{\beta^2}{2}n}\right)\,\Bigg|\, \mathcal F_n\right] \\
	&\leq \exp\left(-\lambda \alpha \e^{\frac{\beta^2}{2}n}\right) \prod_{X_u<\beta n}\es\left[ \exp\left(-\lambda   \frac{1}{2^{n}} \, \e^{\beta X_u-\frac{\beta^2}{2}n}  \big(Z^u +(X_u-\beta n)W^u\big)\right)\,\bigg|\, \mathcal F_n\right]. \\
\end{align*}
Then, using Proposition \ref{lowlap},
\begin{align*}
	\es\Bigg[ \exp\bigg(-\lambda  \frac{1}{2^{n}}\,  &\e^{\beta X_u-\frac{\beta^2}{2}n}  \big(Z^u +(X_u-\beta n)W^u\big)\bigg)\,\bigg|\, \mathcal F_n\Bigg]\\
	&\leq \exp\left(-\lambda  \,\frac{1}{2^{n}}  \e^{\beta X_u-\frac{\beta^2}{2}n}   (X_u-\beta n)+C\left(\left(\lambda  \frac{1}{2^{n}}  \e^{\beta X_u-\frac{\beta^2}{2}n} \e^{-\beta(X_u-\beta n)}\right)^2+1\right)\right)\\
	&\leq \exp\left(\frac{1}{\e\beta}\lambda\frac{\e^{\frac{\beta^2}{2}n}}{2^n}+C\left(\left(\lambda\frac{\e^{\frac{\beta^2}{2}n}}{2^n}\right)^2+1\right)\right).
\end{align*}
Therefore, by choosing $\lambda=2^n \e^{-\frac{\beta^2}{2}n}$, one gets
\begin{align*}
	\p\Bigg(\sum_{X_u<\beta n}  \frac{1}{2^{n}}\,  \e^{\beta X_u}&^{-\frac{\beta^2}{2}n} \big(Z^u +(X_u-\beta n)W^u\big)<-\alpha\e^{\frac{\beta^2}{2}n}\,\bigg|\, \mathcal F_n\Bigg)&\hspace{5cm}\\
	&\leq \exp\left(- \alpha 2^n\right) \prod_{X_u<\beta n}\exp\left(\frac{1}{\e\beta}+2C\right)\\
	&\leq \exp\left(- \Big(\alpha-\frac{1}{\e\beta}-C\Big) 2^n\right).\\
\end{align*}

For particles between $\beta n$ and $\beta n+1$, Lemma \ref{ZW} and Proposition \ref{subG} provide $C>0$ such that, for $\lambda,a\geq0$,
\[
\es\left[\e^{-\lambda(Z+aW)}\right]\leq \e^{\frac{a^2}{2}+C\left(\lambda e^{-\beta a}\right)^2}.
\]
Then,
\begin{align*}
	\p\Bigg(  \frac{1}{2^{n}}  &\sum_{\beta n\leq X_u\leq\beta n+1}  \e^{\beta X_u-\frac{\beta^2}{2}n} \big(Z^u +(X_u-\beta n)W^u\big)<-\alpha\e^{\frac{\beta^2}{2}n}\,\bigg|\, \mathcal F_n\Bigg)  \\
	&\leq \exp\left(-\lambda \alpha \e^{\frac{\beta^2}{2}n}\right) \prod_{\beta n\leq X_u\leq\beta n+1}\es\left[ \exp\left(-\lambda  \frac{1}{2^{n}}  \,\e^{\beta X_u-\frac{\beta^2}{2}n} \big(Z^u +(X_u-\beta n)W^u\big)\right)\,\bigg|\, \mathcal F_n\right] \\
	&\leq \exp\left(-\lambda \alpha \e^{\frac{\beta^2}{2}n}\right) \prod_{\beta n\leq X_u\leq\beta n+1} \exp\left(\frac{1}{2} (X_u-\beta n)^2+\left(\lambda  \frac{1}{2^{n}}  \,\e^{\beta X_u-\frac{\beta^2}{2}n} \e^{-\beta(X_u-\beta n)}\right)^2\right) \\
	&\leq \exp\left(-\lambda \alpha \e^{\frac{\beta^2}{2}n}\right) \prod_{\beta n\leq X_u\leq\beta n+1} \exp\left(\frac{1}{2}+\left(\lambda \,\frac{\e^{\frac{\beta^2}{2}n}}{2^n}\right)^2\right).
\end{align*}
Taking again $\lambda = 2^n\,\e^{-\frac{\beta^2}{2}n}$ gives
\[
	\p\Bigg(\frac{1}{2^n}\sum_{\beta n\leq X_u\leq\beta n+1} \e^{\beta X_u - \frac{\beta^2}{2}n} \big(Z^u +(X_u-\beta n)W^u\big)<-\alpha\e^{\frac{\beta^2}{2}n}\,\bigg|\, \mathcal F_n\Bigg)  \leq \exp\left(- \left(\alpha-\frac{3}{2}\right) 2^n\right).\\
\]

It remains to deal with the particles above $\beta n+1$. An estimate on the Laplace transform of $W$ is needed for this purpose. Recall that
\[
W = \frac{1}{2}\,\e^{\beta X_0-\frac{\beta^2}{2}}\,W^0+ \frac{1}{2} \,\e^{\beta X_1-\frac{\beta^2}{2}}\,W^1,
\]
where $W^0$ and $W^1$ are independent and have the same law as $W$.
A straightforward computation shows that
\[
\lim\limits_{x\rightarrow 0}\frac{\log\log1/\p(  \frac{1}{2}\,\e^{\beta X_0-\frac{\beta^2}{2}} <x)}{\log\log1/x}=2.
\]
And \cite[Theorem 1.2]{nikula20} provides $c>0$ such that for every $\lambda\geq1$, one has\footnote{This is true for any exponent strictly below 2, here we choose 3/2.}
\begin{equation}\label{Wlap}
	\es\left[\e^{-\lambda W}\right]\leq \e^{-c\log^{3/2}(\lambda)}.\\
\end{equation}
When $x>\beta n+1$, we can use a portion of $W$ to improve the Chernoff bound:
\begin{align*}
	\es&\left[\exp\left(-\lambda  \frac{1}{2^n}\e^{\beta x-\frac{\beta^2}{2}n}\big(Z +(x-\beta n)W\big)\right)\right]\\
	&=\es\left[\exp\left(-\lambda \frac{1}{2^n}\e^{\beta x-\frac{\beta^2}{2}n} \big(Z +(x-\beta n-1 )W\big)\right)\exp\left(-\lambda \frac{1}{2^n}\e^{\beta x-\frac{\beta^2}{2}n} \,W\right)\right]\\
	&\leq \es\left[\exp\left(-2\lambda \frac{1}{2^n}\e^{\beta x-\frac{\beta^2}{2}n} \big(Z +(x-\beta n-1 )W\big)\right)\right]^{\frac{1}{2}}\es\left[\exp\left(-2\lambda \frac{1}{2^n}\e^{\beta x-\frac{\beta^2}{2}n} \,W\right)\right]^{\frac{1}{2}}.
\end{align*}
Then using Lemma \ref{highlap} for the first term and Equation (\ref{Wlap}) for the second term yields
\begin{align*}
	\es\Big[\exp&\Big(-\lambda \frac{1}{2^n}\e^{\beta x-\frac{\beta^2}{2}n} \big(Z +(x-\beta n)W\big)\Big)\Big]\\
	&\leq \exp\Bigg(C\lambda \frac{1}{2^n}\e^{\beta x-\frac{\beta^2}{2}n} \e^{-\beta(x-\beta n-1)}(x-\beta n-1 )+2C\Big(\lambda \frac{1}{2^n}\e^{\beta x-\frac{\beta^2}{2}n} \e^{-\beta(x-\beta n-1)}\Big)^2\\
	&\hspace{9cm}-\frac{c}{2}\log^{3/2}\Big(2\lambda \frac{1}{2^n}\e^{\beta x-\frac{\beta^2}{2}n} \Big)\Bigg)\\
	&= \exp\left(C\lambda\frac{\e^{\frac{\beta^2}{2}n}}{2^n}\,\e^{\beta}(x-\beta n-1 )+2C\Big(\lambda\frac{\e^{\frac{\beta^2}{2}n}}{2^n}\e^{\beta}\Big)^2-\frac{c}{2}\log^{3/2}\Big(2\lambda \frac{1}{2^n}\e^{\beta x-\frac{\beta^2}{2}n} \Big)\right),
\end{align*}
as soon as $2\lambda\, \frac{1}{2^n}\,\e^{\beta (\beta n+1)-n\frac{\beta^2}{2}} \geq 1$. For $\lambda =\e^{-\beta} \,2^n\,\e^{-\frac{\beta^2}{2}n}$, this condition is fulfilled and we obtain
\begin{align*}
	\es\left[\exp\left(-\lambda\frac{1}{2^n}\e^{\beta x-\frac{\beta^2}{2}n}\big(Z +(x-\beta n)W\big)\right)\right]
	&\leq \exp\left(C(x-\beta n-1 )+2C-\frac{c\beta^{3/2}}{2}\big(x-\beta n-1 \big)^{3/2}\right)\\
	&\leq\exp\left(D\right),
\end{align*}
for some $D>0$ \footnote{Here we use the fact that $\beta>0$.}. Choosing $\lambda =\e^{-\beta} \,2^n\e^{-\frac{\beta^2}{2}n}$ gives
\begin{align*}
	\p\bigg(\frac{1}{2^n} \sum_{X_u>\beta n+1} & \e^{\beta x-\frac{\beta^2}{2}n} \big(Z^u +(X_u-\beta n)W^u\big)<-\alpha\e^{\frac{\beta^2}{2}n}\,\bigg|\,\mathcal F_n\bigg)\\
	&\leq \exp\left(-\lambda \alpha\e^{\frac{\beta^2}{2}n}\right) \prod_{X_u>\beta n+1}\es\left[ \exp\left(-\lambda \frac{1}{2^n}\e^{\beta X_u-n\frac{\beta^2}{2}} \big(Z^u +(X_u-\beta n)W^u\big)\right)\,\bigg|\, \mathcal F_n\right] \\
	& \leq \exp\left(-(\e^{-\beta} \alpha-D)\,2^n\right).
\end{align*}
With $\alpha=3\max\left(1+C+\frac{1}{\e\beta},\frac{5}{2},(1+D)\,\e^\beta\right)$, one gets
\begin{align*}
	\p\left(Z<-\alpha\e^{\frac{\beta^2}{2}n}\right) &\leq\p\left(\frac{1}{2^n} \sum_{X_u<\beta n} \e^{\beta X_u-\frac{\beta^2}{2}n} \big(Z^u +(X_u-\beta n)W^u\big)<-\frac{\alpha}{3}\,\e^{\frac{\beta^2}{2}n}\right)\\
	&+\p\left( \frac{1}{2^n} \sum_{\beta n\leq X_u\leq\beta n+1} \e^{\beta X_u-\frac{\beta^2}{2}n} \big(Z^u +(X_u-\beta n)W^u\big)<-\frac{\alpha}{3}\,\e^{\frac{\beta^2}{2}n}\right)\\
	&+\p\left( \frac{1}{2^n} \sum_{X_u>\beta n+1}  \e^{\beta X_u-\frac{\beta^2}{2}n} \big(Z^u +(X_u-\beta n)W^u\big)<-\frac{\alpha}{3}\,\e^{\frac{\beta^2}{2}n}\right)\\
	&\leq 3\exp(-2^n).
\end{align*}
Now, if $x>1$, picking $n\in\mathbb N$ such that $\alpha\e^{\frac{\beta^2}{2}n}\leq x<\alpha\e^{\frac{\beta^2}{2}(n+1)}$ provides a $c>0$ such that
\[
\p\left(Z<-x\right)\leq 3\e^{-cx^\gamma}
\]
and concludes the proof.

\subsection{A continuous analogue}
The aim of this section is twofold. First we prove that $Z$ displays a sub-Gaussian left tail in the $L^4$ phase  ($\beta<\beta_c/2$) using a related model. Then, we use the same techniques to recover a uniform bound on the Laplace transform of $Z+aW$ when $a<0$.\\

We consider the binary branching Wiener process which is defined the following way:  start with one particule at $0$ that splits into two particles which diffuse as standard Brownian motions. At time $1$, those two particles split into two new particles which diffuse independently from the position of their ancestor and so on. More precisely, if $(B^v)_{v\in\mathbb T}$ is a family of i.i.d. Brownian motions and $\nt \de \{0,1\}^{\lceil t \rceil}$, where $\lceil t \rceil$ is the smallest integer above $t$, define for $t\geq0$ and $u\in\mathcal N_t$
\[
X_t(u)\de\sum_{\varnothing <v\leq u} B^v_1+B^u_{t-\lfloor t \rfloor},
\]
where $\lfloor t \rfloor$ is the integer part of $t$.
At time $t$, we thus have $2^{\lceil t \rceil}$ particles and at integer times $n\in\mathbb N$ the process is distributed as the original branching random walk. This detour by the continuous case will allow us to perform stochastic calculus.\\

The analogue of the derivative martingale is given by
\[
Z_t \de \frac{1}{2^{\lceil t \rceil}}\summ \left(X_t(u)-\beta t\right)\ebt.
\]
Note that despite the integer part in the definition of $Z_t$, it is a continuous martingale with respect to $\mathcal F_t\de \sigma(X_s(u), u\in\mathcal N_s,s\leq t)$. Its quadratic variation is given by
\[
\braket{Z}_t = \frac{1}{4^{\lceil t \rceil}}\sum_{u,v\in\nt}\int_{0}^{t} \left(1+\beta(X_s(u)-\beta s)\right)\left(1+\beta(X_s(v)-\beta s)\right)\ebsu \ebsv \,\ind_{s\leq u\wedge v}\,\ds,
\]
where $u\wedge v$ is the generation of the last common ancestor of $u$ and $v$.\\

Let $\ft\de\{0,1\}^{\mathbb N}$ and for $u\in\ft$, let $X_t(u)$ denotes the position of the ancestor of $u$ at time $t$. $\ft$ is endowed with the ultrametric distance $\mathrm d$ defined by $\mathrm d(u,v)=2^{-u\wedge v}$ and the uniform probability measure $\mu$ which is characterized by $\mu(\{v : v\geq u\} )= 2^{-|u|}$. The quadratic variation can be reformulated as
\begin{align*}
\braket{Z}_t &=\int_{\ft^2\times [0,t]} \left(1+\beta(X_s(u)-\beta s)\right)\left(1+\beta(X_s(v)-\beta s)\right)\ebuv \,\ind_{s\leq u\wedge v}\,\mu(\mathrm du)\mu(\mathrm dv)\,\ds\\
&=\int_{\ft^2\times [0,t]} \left(1+\beta(X_s(u)-\beta s)\right)^2 \e^{2\beta X_s(u)-\beta^2 s} \,\ind_{s\leq u\wedge v}\,\mu(\mathrm du)\mu(\mathrm dv)\,\ds.
\end{align*}

The approach laid in \cite{lacoinrhodesvargas22} is to define an auxiliary martingale $\tilde{Z}$ where the excursions above some well chosen threshold are removed and to show Gaussian concentration for this new martingale and the difference with the original one by proving that their brackets remain bounded. We are reproducing the main ideas of the proof here since we are going to use them for the proof of Proposition \ref{lowlap}. 
The following proposition states that the left tail of $Z$ is sub-Gaussian in the $L^4$ phase.

\begin{prop}\label{subG}
	Let $\beta<\frac{\beta_c}{2}$, then there exists $C>0$ such that, for $\lambda\geq0$,
	\[
	\es\left[\e^{-\lambda Z}\right]\leq \e^{C\lambda^2}.
	\]
\end{prop}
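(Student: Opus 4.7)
The plan is to apply the exponential supermartingale inequality: for a continuous martingale $M$ satisfying $\braket{M}_\infty\leq K$ almost surely, $\es[\e^{-\lambda M_\infty}]\leq \e^{K\lambda^2/2}$ for every $\lambda\geq 0$. Since $\braket{Z}_\infty$ is not pathwise bounded, following the strategy of \cite{lacoinrhodesvargas22} sketched in the paragraph preceding the proposition, I will decompose $Z=\tilde Z+(Z-\tilde Z)$ so that both pieces are continuous martingales with brackets bounded almost surely by deterministic constants, and then combine the two bounds via Cauchy--Schwarz.

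Fix a barrier height $a>0$ and, for each ray $u\in\ft$, set $\tau_u\de\inf\{s\geq 0: X_s(u)>\beta s+a\}$. Writing $Z_t$ as a stochastic integral against the family of driving Brownian motions $(B^v)_{v\in\mathbb T}$ of the branching Wiener process, define $\tilde Z_t$ by multiplying each integrand by $\ind_{s<\tau_u}$, so that each particle is frozen as soon as it crosses the barrier. Then $\tilde Z$ is a continuous martingale and one computes
\[
\braket{\tilde Z}_\infty = \int_{\ft^2\times[0,\infty)}\bigl(1+\beta(X_s(u)-\beta s)\bigr)\bigl(1+\beta(X_s(v)-\beta s)\bigr)\,\ebuv\,\ind_{s<\tau_u\wedge\tau_v}\,\ind_{s\leq u\wedge v}\,\mu(\mathrm du)\mu(\mathrm dv)\,\ds.
\]
On $\{s<\tau_u\}$ one has $X_s(u)-\beta s\leq a$, and the map $y\mapsto(1+\beta y)^2\e^{2\beta y}$ is bounded on $(-\infty,a]$ by a constant $C_a$; combined with the ultrametric identity $\int \ind_{s\leq u\wedge v}\,\mu(\mathrm du)\mu(\mathrm dv)=2^{-\lceil s\rceil}$ and the integrability of $s\mapsto 2^{-s}\e^{\beta^2 s}$ under $\beta<\beta_c/\sqrt 2$ (implied by the $L^4$ hypothesis), this yields a deterministic bound $\braket{\tilde Z}_\infty\leq K_1(a)$.

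The main difficulty is the analogous bound on $\braket{Z-\tilde Z}_\infty$. Its integrand is supported on $\{s\geq\tau_u\wedge\tau_v\}$, where the exponentials $\e^{2\beta X_s(u)-\beta^2 s}$ are no longer pointwise controlled: one must exploit both the smallness of $\mu(\{u:\tau_u<\infty\})$ and the $L^4$ hypothesis $\beta<\beta_c/2$ (under which the relevant $4$-point integrals remain finite). By the strong Markov property at $\tau_u$, the post-barrier excursions of $X(u)$ can be compared to independent copies of the derivative martingale started from level $a$; this gives an in-expectation control of the bracket that is then upgraded to an almost-sure bound $\braket{Z-\tilde Z}_\infty\leq K_2(a)$ via a secondary stopping argument based on the value of $\braket{Z-\tilde Z}$ itself. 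Once both almost-sure bracket bounds are in hand, the exponential supermartingale inequality applied to $\tilde Z$ and to $Z-\tilde Z$, combined with Cauchy--Schwarz in the form $\es[\e^{-\lambda Z}]\leq \es[\e^{-2\lambda\tilde Z_\infty}]^{1/2}\es[\e^{-2\lambda(Z-\tilde Z)_\infty}]^{1/2}$, concludes the proof with $C=2(K_1+K_2)$. The principal obstacle is precisely the almost-sure control of $\braket{Z-\tilde Z}_\infty$, which is the technical heart inherited from \cite{lacoinrhodesvargas22}.
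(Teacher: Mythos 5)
Your proposal correctly identifies the structure (decompose $Z$, get a deterministically bounded bracket for the "below-barrier" piece, handle the remainder), but the step you flag as "the technical heart" is exactly where the proof cannot be made to go through as you describe. The bracket $\braket{Z-\tilde Z}_\infty$ is \emph{not} almost surely bounded: on $\{s\geq\tau_u\}$ the frozen particles continue to diffuse as Brownian motions, so $\e^{2\beta X_s(u)-\beta^2 s}$ is unbounded from above on that event, and there is no deterministic constant $K_2(a)$. Your "secondary stopping argument based on the value of $\braket{Z-\tilde Z}$" would produce a stopped martingale whose terminal value differs from $(Z-\tilde Z)_\infty$ precisely on the bad event, so it would not establish the exponential bound you need. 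The paper sidesteps this by not working with the martingale $Z-\tilde Z$ at all: it proves the pathwise inequality $Z_\infty\geq\tilde Z_\infty-Q$ where $Q$ is a \emph{different} nonnegative random variable (a deterministic functional of the excursion start times $T_k^u$, not of the excursion trajectories), then applies Cauchy--Schwarz to $\es[\e^{-\lambda\tilde Z_\infty+\lambda Q}]$, and finally shows that the Doob martingale $Q_t=\es[Q\vbar\mathcal F_t]$ — a genuinely different martingale from $Z-\tilde Z$ — has an almost surely bounded bracket. That last bound is obtained from explicit estimates on the gradient $A^u_t$ of $Q_t$ (Lemmas 5.3 and 5.4 of \cite{lacoinrhodesvargas22}), which is the real technical content.

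A second, related issue is your choice of barrier. You take the line $\beta s+a$ of slope $\beta$, so $X_s(u)-\beta s$ is a driftless Brownian motion relative to the barrier and crosses infinitely often, which destroys the excursion structure. The paper tilts the barrier to slope $\beta+\eta$ with $\eta>0$; each crossing then happens with probability strictly less than one, the number of excursions is geometric, and the $L^4$ hypothesis $\beta<\beta_c/2$ is exactly what makes room for a choice of $\eta$ satisfying simultaneously $\beta^2+2\beta\eta<\log 2$ (needed for both bracket bounds) and $2\eta>\beta$ (needed for the gradient estimates on $Q_t$). With your flat barrier neither condition has a role, and the actual place where $\beta<\beta_c/2$ is used never appears.
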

\begin{proof}
Let us define the following stopping times:
\begin{align*}
	T_k^u&\de \inf\left\{t\geq R_{k-1}^u \, : \, X_t(u)=(\beta+\eta)t+A\right\},\\
	R_k^u&\de \inf\left\{t\geq T_{k}^u \, : \, X_t(u)=\beta t\right\},
\end{align*}
where $R_0^u\de 0$. Let $\mathcal{R}^u\de \cup_k [ R_{k-1}^u,T_k^u ]$ and define

\begin{align*}
	\tilde{Z}_t &\de\int_{\ft} \int_0^t \left(1+\beta(X_s(u)-\beta s)\right)\ebsu\,\ind_{s\in\mathcal R^u} \,\mathrm dX_s(u) \, \mu(\mathrm du).
\end{align*}
It is a martingale and the difference with the original martingale $Z$ is controlled by
\begin{align*}
	Q &= \int_{\ft} \sum_{k\geq 1} \ind_{\{T_k^u<+\infty\}}\big(A+\eta T_k^u\big)\e^{\big(\frac{\beta^2}{2}+\beta\eta\big)T_k^u+\beta A}\,\mu(\mathrm du) \\
	& \eqqcolon\int_{\ft} Q^u \mu(\mathrm du),
\end{align*}
and the fact that $Z_\infty\geq\tilde Z_\infty- Q$, see \cite[Section 5.2]{lacoinrhodesvargas22} for more details.\\

Now we prove that both right terms exhibit Gaussian concentration.
The bracket of the first one is given by
\begin{align*}
	\braket{\tilde{Z}}_t &=\int_{\ft^2\times [0,t]} \left(1+\beta(X_s(u)-\beta s)\right)\left(1+\beta(X_s(v)-\beta s)\right)\ebuv \\
	&\hspace{9cm} \ind_{\{s\in\mathcal R^u \cap \mathcal R^v\}}\,\ind_{\{s\leq u\wedge v\}}\,\mu(\mathrm du)\mu(\mathrm dv)\,\ds\\
	\\
	&=\int_{\ft^2\times [0,t]} \left(1+\beta(X_s(u)-\beta s)\right)^2 \e^{2\beta X_s(u)-\beta^2 s} \,\ind_{\{s\in\mathcal R^u \cap \mathcal R^v\}}\,\ind_{\{s\leq u\wedge v\}}\,\mu(\mathrm du)\mu(\mathrm dv)\,\ds.
\end{align*}

Using the fact that $X_s(u)\leq (\beta+\eta)s+A$ when $s\in\mathcal R^u$, one gets
\begin{align*}
	\braket{\tilde{Z}}_t
	&\leq\int_{\ft^2\times [0,t]} \left(1+\beta(\eta s+A)\right)^2 \e^{(\beta^2+2\beta\eta)s+2\beta A } \,\ind_{\{s\leq u\wedge v\}}\,\mu(\mathrm du)\mu(\mathrm dv)\,\ds.
\end{align*}
Now, note that the following integral
\[
\int_{\ft^2} \e^{\alpha u\wedge v}\,\mu(\mathrm du)\mu(\mathrm dv) = \sum_{n\geq0} \frac{\e^{\alpha n}}{2^{n+1}}
\]
is finite if and only if $\alpha<\log2$. This guaranties that $\braket{\tilde{Z}}_\infty$ is bounded if $\beta^2+2\beta\eta<\log2$ and provides a sub-Gaussian tail for $\tilde{Z}_\infty$.\\

To prove Gaussian concentration for $Q$, the authors in \cite{lacoinrhodesvargas22} start by showing that $\es Q<\infty$. Then they use the continuous martingale $Q^u_t\de\es\left[Q^u\,|\,\mathcal F_t\right]$ and the decomposition
\[
\mathrm dQ^u_t\de A^u_t\,\mathrm dX_t(u).
\]
This way, the bracket is given by
\[
\braket{Q}_\infty = \int_{\ft^2\times [0,\infty)} A^u_t A^v_t \,\ind_{\{t\leq u\wedge v\}}\, \,\mu(\mathrm du) \mu(\mathrm dv) \dt.
\]
The method used in \cite{lacoinrhodesvargas22} to obtain the expression of $A^u_t$ is \guillemotleft purely Brownian\guillemotright ~and does not involve the covariance structure, it is therefore valid in our context. Let us reproduce the results here.\\

When $t\in(T_k^{u},R_k^{u})$, the Markov property for $(X_t(u))_{t\geq0}$ gives 
\[
Q^u = \sum_{i=1}^{k} (A+\eta T_i^{u})\e^{\big(\frac{\beta^2}{2}+\beta\eta\big)T_i^{u}}+\es_{X_t(u)}\left[\sum_{i\geq 1} \ind_{\widehat{T}_i^{t}<+\infty}\big(A+\eta (\widehat{T}_i^{t}+t)\big)\e^{\big(\frac{\beta^2}{2}+\beta\eta\big)(\widehat{T}_i^{t}+t)+\beta A}\right],
\]
where $\es_z$ denotes the expectation with respect to the law of a standard Brownian motion starting at $z$ and $\widehat{T}_0^t\de0$,
\begin{align*}
	\widehat{R}_k^t&\de\inf\left\{s\geq\widehat{T}^t_{k-1}\,:\,B_s\leq\beta(t+s)\right\},\\
	\widehat{T}_k^t&\de\inf\left\{s\geq\widehat{R}^t_{k}\,:\,B_s=A+(\beta+\eta)(t+s)\right\}.
\end{align*}
In this case, this yields to the following
\[
A^u_t =\left. \partial_z \left(\es_{z}\left[\sum_{i\geq 1} \ind_{\widehat{T}_i^{t}<+\infty}\big(A+\eta (\widehat{T}_i^{t}+t)\big)\e^{\big(\frac{\beta^2}{2}+\beta\eta\big)(\widehat{T}_i^{t}+t)+\beta A}\right]\right) \right\rvert_{z=X_t(u)}.
\]
\noindent
When $t\in(R_k^{u},T_{k+1}^{u})$, in the same manner,
\[
Q^u = \sum_{i=1}^{k} (A+\eta T_i^{u})\e^{\big(\frac{\beta^2}{2}+\beta\eta\big)T_i^{u}}+\es_{X_t(u)}\left[\sum_{i\geq 1} \ind_{{T}_i^{t}<+\infty}\big(A+\eta ({T}_i^{t}+t)\big)\e^{\big(\frac{\beta^2}{2}+\beta\eta\big)({T}_i^{t}+t)+\beta A}\right],
\]
where ${R}_0^t\de0$ and
\begin{align*}
	{T}_k^t&\de\inf\left\{s\geq{R}^t_{k-1}\,:\,B_s\leq A+(\beta+\eta)(t+s)\right\},\\
	{R}_k^t&\de\inf\left\{s\geq{R}^t_{k}\,:\,B_s=\beta(t+s)\right\}.
\end{align*}
Then
\[
A^u_t = \left.\partial_z \left(\es_{z}\left[\sum_{i\geq 1} \ind_{{T}_i^{t}<+\infty}\big(A+\eta ({T}_i^{t}+t)\big)\e^{\big(\frac{\beta^2}{2}+\beta\eta\big)({T}_i^{t}+t)+\beta A}\right]\right)\right\rvert_{z=X_t(u)}.
\]
In a more compact form, this gives
\[
	A^u_t = \left\{
	\begin{array}{ll}
		f_1(t,X_t(u)), & \mbox{ if } t\in(T_k^u,R_k^u), \\
		f_2(t,X_t(u)), & \mbox{ if } t\in(R_k^u,T_{k+1}^u).
	\end{array}
	\right.
\]
If one chooses $\eta$ such that $2\eta>\beta$ and $\beta^2+2\beta\eta<\log2$ (which is possible in the $L^4$ phase), \cite[Lemma 5.3, Lemma 5.4]{lacoinrhodesvargas22}\footnote{You should read $z\leq(\gamma+\eta)t+A$ in the statement of  Lemma 5.4.} provides
$C=C(A,\eta,\beta)>0$ such that
\begin{align*}
	|f_1(t,z)|&\leq C(t+1)\e^{\big(\frac{\beta^2}{2}+\beta\eta\big)t},\quad \text{ for } z\geq \beta t,\\
	|f_2(t,z)|&\leq C(t+1)\e^{\big(\frac{\beta^2}{2}+\beta\eta\big)t},\quad \text{ for } z\leq (\beta+\eta) t+A.\\
\end{align*}
Thus,
\[
\braket{Q}_\infty \leq \int_{\ft^2\times [0,\infty)} C^2(t+1)^2\e^{(\beta^2+2\beta\eta)t} \,\ind_{t\leq u\wedge v}\, \,\mu(\mathrm du) \mu(\mathrm dv) \dt<\infty,
\]
which proves that $Q$ displays Gaussian concentration.
\end{proof}
The above approach also provides a useful bound on the Laplace transform of low particles.

\begin{prop}\label{lowlap}
There exists $C>0$ such that, for $a\leq0$ and $\lambda\geq0$,
\[
\es\left[\e^{-\lambda(Z+aW)}\right]\leq \e^{-\lambda a+C\left(\left(\lambda \e^{-\beta a}\right)^2+1\right)}.
\]
\end{prop}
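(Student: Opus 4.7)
The plan is to exploit the shift identity $Z+aW=\e^{-\beta a}Z^{[a]}$ derived before Lemma~\ref{ZW}, where $Z^{[a]}$ is the derivative martingale with the initial ancestor placed at $a$. Setting $\mu\de\lambda\e^{-\beta a}$, using that $\es Z^{[a]}=Z^{[a]}_0=a\e^{\beta a}$, and observing the algebraic identity $-\mu\,a\e^{\beta a}=-\lambda a$, the desired inequality is equivalent to
\[
\es\bigl[\e^{-\mu Z^{[a]}}\bigr]\leq \e^{-\mu\,a\e^{\beta a}+C(\mu^2+1)},\qquad a\leq 0,\ \mu\geq 0,
\]
with $C$ independent of $a$. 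In other words, the task reduces to a sub-Gaussian concentration estimate for $Z^{[a]}$ around its mean $a\e^{\beta a}$, with a sub-Gaussian constant uniform in $a\leq 0$.

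To establish this I would replay the stopping-time machinery of the proof of Proposition~\ref{subG} on the shifted trajectories $Y_t(u)\de a+X_t(u)$. Define, for each $u$,
\[
T_k^{u,a}\de\inf\{t\geq R_{k-1}^{u,a}:Y_t(u)=(\beta+\eta)t+A\},\quad R_k^{u,a}\de\inf\{t\geq T_k^{u,a}:Y_t(u)=\beta t\},
\]
with $R_0^{u,a}\de 0$ and $\mathcal R^{u,a}\de\bigcup_k[R_{k-1}^{u,a},T_k^{u,a}]$, and let $\tilde Z^{[a]}$, $Q^{[a]}$ denote the corresponding truncated martingale and correction term obtained from the formulas of Proposition~\ref{subG} by the formal substitution $X_s(u)\mapsto Y_s(u)$ in every integrand (the driving noise is still $\mathrm dX_s(u)$, since $\mathrm dY_s(u)=\mathrm dX_s(u)$). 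The same excursion argument then yields
\[
Z^{[a]}_\infty\geq a\e^{\beta a}+\tilde Z^{[a]}_\infty-Q^{[a]}.
\]
The key observation giving uniformity is that on $\mathcal R^{u,a}$ one has $Y_s(u)\leq(\beta+\eta)s+A$ \emph{independently of $a$}, so the pointwise bound on the integrand of $\langle\tilde Z^{[a]}\rangle_\infty$ reads $(1+\beta(\eta s+A))^2\e^{(\beta^2+2\beta\eta)s+2\beta A}\ind_{s\leq u\wedge v}$, from which $\langle\tilde Z^{[a]}\rangle_\infty\leq K$ a.s.\ under the $L^4$ assumption $\beta^2+2\beta\eta<\log 2$. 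An analogous $a$-independent bound holds for $\langle Q^{[a]}\rangle_\infty$ since the integrands $f_j(t,z)$ from \cite[Lemma 5.3, 5.4]{lacoinrhodesvargas22} are controlled by $C(t+1)\e^{(\beta^2/2+\beta\eta)t}$ in terms of $t$ and the current position relative to the barriers only, not the starting point.

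The one quantity that genuinely depends on $a$ is $\es Q^{[a]}$, and this is the main obstacle. The hitting probability of the upper barrier by $Y(u)$ is the classical $\e^{-2(\beta+\eta)(A-a)}$, exponentially small as $a\to-\infty$, while conditionally on hitting, the first-passage time concentrates near $(A-a)/(\beta+\eta)$. A direct computation using the first-passage density of Brownian motion to an affine boundary then shows that in the $L^4$ regime the exponential growth of the weight $(A+\eta t)\e^{(\beta^2/2+\beta\eta)t+\beta A}$ is over-compensated by the decay of the hitting probability, yielding a uniform bound $\es Q^{[a]}\leq C_0$ for all $a\leq 0$. With these uniform controls in hand, the exponential martingale inequality applied separately to $\tilde Z^{[a]}_\infty$ and to $Q^{[a]}-\es Q^{[a]}$, together with Cauchy--Schwarz and the elementary bound $\mu\es Q^{[a]}\leq(\mu^2+C_0^2)/2$, give
\[
\es\bigl[\e^{-\mu Z^{[a]}_\infty}\bigr]\leq \e^{-\mu a\e^{\beta a}}\,\es\bigl[\e^{-2\mu\tilde Z^{[a]}_\infty}\bigr]^{1/2}\es\bigl[\e^{2\mu(Q^{[a]}-\es Q^{[a]})}\bigr]^{1/2}\e^{\mu\es Q^{[a]}}\leq \e^{-\mu a\e^{\beta a}+C(\mu^2+1)},
\]
and substituting $\mu=\lambda\e^{-\beta a}$ yields the proposition.
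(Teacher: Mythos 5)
Your proposal follows essentially the same plan as the paper: reduce to a uniform sub-Gaussian bound on $Z^{[a]}$ via the shift identity $Z+aW=\e^{-\beta a}Z^{[a]}$, re-run the excursion decomposition $Z^{[a]}_\infty\geq\tilde Z^{[a]}_\infty-Q^{[a]}$ on shifted trajectories, observe that the pointwise bounds $Y_s(u)\leq(\beta+\eta)s+A$ on $\mathcal R^{u,[a]}$ and the estimates from \cite[Lemma 5.3, 5.4]{lacoinrhodesvargas22} give $a$-independent controls on $\braket{\tilde Z^{[a]}}_\infty$ and $\braket{Q^{[a]}}_\infty$, and close with Cauchy--Schwarz, the exponential martingale inequality, and Young's inequality for $\mu\es Q^{[a]}$. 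The algebraic bookkeeping ($\es\tilde Z^{[a]}_\infty=a\e^{\beta a}$, $-\mu a\e^{\beta a}=-\lambda a$ under $\mu=\lambda\e^{-\beta a}$) is also correct.

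The one place you leave a real gap is the uniform bound on $\es Q^{[a]}$. You correctly identify it as the crux and gesture at a "direct computation" with the first-passage density, but you never actually control the infinite sum over $k\geq 1$: the density of $T_1^{u,[a]}$ alone does not bound $\es Q_u^{[a]}$, and you would still have to invoke the strong Markov property at $T_1^{u,[a]}$ to reduce the later excursions to the $a$-independent case, and then show the resulting integral against the first-passage density converges uniformly in $a\leq 0$. The paper handles this cleanly by proving the stochastic dominance $Q_u^{[a]}\leq_s Q_u$ (hence $\es Q^{[a]}\leq\es Q<\infty$): one conditions on $T_1$, uses the strong Markov property to equate the conditional laws of $(T_k)_{k\geq 2}$, and then compares the hitting-time densities, which are pointwise decreasing in the intercept once $A>\tfrac{1}{\beta+\eta}$ (a choice of $A$ you would also have to make). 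Your intuition — that the exponential decay $\e^{-2(\beta+\eta)(A-a)}$ of the hitting probability beats the growth of the weight $\e^{(\beta^2/2+\beta\eta)t}$ — is sound and would yield the same conclusion if carried out, but as written this step is asserted rather than proved, and it is precisely the step where uniformity in $a$ is genuinely at stake.
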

\begin{proof}
As in the discrete case, define the derivative martingale when the initial ancestor starts at $a\leq0$:
\[
Z_t^{[a]} \de \frac{1}{2^{\lceil t \rceil}}\summ \left(a+X_t(u)-\beta t\right)\ebta.
\] 
Define as previously
\begin{align*}
	T_k^{u,[a]}&\de \inf\left\{t\geq R_{k-1}^u \, : \, a+X_t(u)=(\beta+\eta)t+A\right\},\\
	R_k^{u,[a]}&\de \inf\left\{t\geq T_{k}^u \, : \, a+X_t(u)=\beta t\right\},
\end{align*}
where $R_0^{u,[a]}\de 0$. Let $\mathcal{R}^{u,[a]}\de \bigcup_k [ R_{k-1}^{u,[a]},T_k^{u,[a]} ]$ and define
\begin{align*}
	\tilde{Z}^{[a]}_t &=a\e^{\beta a}+\int_{\ft} \int_0^t \left(1+\beta(a+X_s(u)-\beta s)\right)\ebsa\,\ind_{s\in\mathcal R^{u,[a]}} \,\mathrm dX_s(u) \, \mu(\mathrm du).
\end{align*}
It is a martingale bounded in $L^2$ by Equation (\ref{bounds}) and we have
\[
Z_\infty^{[a]}\geq \tilde{Z}_\infty^{[a]}-Q^{[a]},
\]
where 
\begin{align*}
	Q^{[a]} &= \int_{\ft}  Q^{[a]}_u\,\mu(\mathrm du)\\
	&=\int_{\ft} \sum_{k\geq 1} \ind_{T_k^{u,[a]}<+\infty}\Big(A+\eta T_k^{u,[a]}\Big)\e^{\big(\frac{\beta^2}{2}+\beta\eta\big)T_k^{u,[a]}+\beta A}\,\mu(\mathrm du).
\end{align*}
We are going to prove
	\begin{align}\label{bounds}
	\sup_{a\leq0} \,\mathrm{ess~sup} \braket{\tilde{Z}^{[a]}}_\infty<\infty,&&\es Q^{[a]}\leq\es Q ,&&\sup_{a\leq0} \,\mathrm{ess~sup} \braket{Q^{[a]}}_\infty<\infty.
	\end{align}
Let us admit for a moment those bounds. Then, using $ \es \tilde Z_\infty^{[a]}=a\e^{\beta a}$,
\begin{align*}
	\es\left[\e^{-\lambda Z_\infty^{[a]}}\right]&\leq \es\left[\e^{-2\lambda \tilde Z_\infty^{[a]}}\right]^\frac{1}{2}
	\es\left[\e^{2\lambda  Q^{[a]}}\right]^\frac{1}{2}\\
	&\leq \e^{-\lambda \es \tilde Z_\infty^{[a]}+\text{ess sup}\braket{\tilde{Z}^{[a]}}_\infty\lambda^2}\,\e^{\lambda\es Q^{[a]}+\text{ess sup}\braket{Q^{[a]}}_\infty\lambda^2}\\
	&\leq \e^{-\lambda a\e^{\beta a}+C(\lambda^2+1)},
\end{align*}
for some $C>0$. And using $Z+aW =\e^{-\beta a}Z^{[a]}$ concludes the proof.\\

Now let us prove the $3$ statements in (\ref{bounds}). First, using the fact that $a+X_s(u)\leq (\beta+\eta)s+A$ when $s\in\mathcal R^{u,[a]}$, one obtains the same finite bound as in the case $a=0$, namely
\begin{align*}
	\braket{\tilde{Z}^{[a]}}_\infty
	&\leq\int_{\ft^2\times [0,\infty)} \left(1+\beta(\eta s+A)\right)^2 \e^{(\beta^2+2\beta\eta)s+2\beta A } \,\ind_{s\leq u\wedge v}\,\mu(\mathrm du)\mu(\mathrm dv)\,\ds<\infty.
\end{align*}
\begin{rem}
	In the case $a>0$, we lose the control on the bracket for $t\in(0,T_1)$.
\end{rem}

The strong Markov property for $(X_t(u),t\geq0)$ applied at $T_1^{u,[a]}$ shows that the conditional law of $\big(T_k^{u,[a]}\big)_{k\geq2}$ given $T_1^{u,[a]}$ is equal to the conditional law of $\big(T_k^{u}\big)_{k\geq2}$ given $T_1^{u}$. We can thus focus on the first hitting time. Now, recall that the hitting time $T_{\alpha,b}$ of a line $s\mapsto \alpha+bs$ with $b>0$ by a standard Brownian motion has a law given by
\[
\ind_{t>0}\,\frac{\alpha}{\sqrt{2\pi}t^{\frac{3}{2}}}\e^{-\alpha b}\e^{-\frac{b^2}{2}t-\frac{\alpha^2}{2t}}+\left(1-\e^{-2\alpha b}\right)\delta_\infty,
\]
see for instance \cite[Formula 2.0.2]{borodinsalminen2002}, and observe that the density part is decreasing in $\alpha$ as soon as $\alpha>\frac{1}{b}$. Thus, if we choose $A>\frac{1}{\beta+\eta}$, we have, for $t\in(0,\infty)$,
\[
\p\left(T_1^{u,[a]}=t\right)\leq\p\left(T_1^{u}=t\right)\footnote{Note that we also have $T_1^{u,[a]} \geq T_1^{u}$ almost surely!}.
\]
\noindent
Then, with $f^{u,[a]}(t)\de\p\left(T_1^{u,[a]}=t\right)$, one gets
\begin{align*}
	\p\left(Q_u^{[a]}>x\right)&=\int_{0}^{\infty} \p\left(Q_u^{[a]}>x\,\Big|\,T_1^{u,[a]}=t\right)\,\p\left(T_1^{u,[a]}=t\right)\dt\\
	&=\int_{0}^{\infty} \p\left(\sum_{k\geq 1} \ind_{T_k^{u,[a]}<+\infty}\Big(A+\eta T_k^{u,[a]}\Big)\e^{(\frac{\beta^2}{2}+\beta\eta)T_k^{u,[a]}+\beta A}>x\,\Bigg|\,T_1^{u,[a]}=t\right)\,f^{u,[a]}(t)\dt\\	
	&=\int_{0}^{\infty} \p\left(\sum_{k\geq 1} \ind_{T_k^{u}<+\infty}\big(A+\eta T_k^{u}\big)\e^{(\frac{\beta^2}{2}+\beta\eta)T_k^{u}+\beta A}>x\,\Bigg|\,T_1^{u}=t\right)\,f^{u,[a]}(t)\dt\\
	&\leq\int_{0}^{\infty} \p\left(\sum_{k\geq 1} \ind_{T_k^{u}<+\infty}\big(A+\eta T_k^{u}\big)\e^{(\frac{\beta^2}{2}+\beta\eta)T_k^{u}+\beta A}>x\,\Bigg|\,T_1^{u}=t\right)\,\p\left(T_1^{u}=t\right)\dt\\
	&=\p\left(Q_u>x\right).
\end{align*}
We thus have the following stochastic dominance
\begin{equation}\label{stocdom}
	Q_u^{[a]}\leq_s  Q_{u}\footnote{We didn't find a way to extend it to $Q^{[a]}\leq_s  Q$.}.
\end{equation}
And by linearity
	\[
	\es Q^{[a]}=\int_{\ft}  \es Q^{[a]}_u\,\mu(\mathrm du)\leq\int_{\ft}  \es Q_u \,\mu(\mathrm du)=\es Q.
	\]

	What remains to be proved is a uniform bound for $\mathrm{ess\,sup}\braket{Q^{[a]}}_\infty$ over $a$. The proof goes the same way as in \cite{lacoinrhodesvargas22}: with
	\[
	Q_u^{[a]}(t) \de \es\left[ Q_u^{[a]}\,\Big|\,\mathcal F_t\right],
	\]
	and
	\[
	\mathrm d Q_u^{[a]}(t) = A^{[a]}_u(t)\mathrm dX_t(u),
	\]
	the Markov property applied to $(a+X_t(u))_{t\geq0}$ yields to the same expression for the infinitesimal increment
	\[
	A^{[a]}_u(t) = \left\{
	\begin{array}{ll}
		f_1(t,a+X_t(u)), & \mbox{ if } t\in\left(T_k^{u,[a]},R_k^{u,[a]}\right), \\
		f_2(t,a+X_t(u)), & \mbox{ if } t\in\left(R_k^{u,[a]},T_{k+1}^{u,[a]}\right).
	\end{array}
	\right.
	\]

	And since in the first case $a+X_t(u)\geq \beta t$ and $a+X_t(u)\leq (\beta+\eta) t+A$ in the second case, one has by \cite[Lemma 5.3, Lemma 5.4]{lacoinrhodesvargas22}
	\[
	\left|A^{[a]}_u(t)\right|\leq C(t+1)\e^{(\frac{\beta^2}{2}+\beta\eta)t}.
	\]
	This yields to the desired uniform bound over $a$
	\[
	\braket{Q^{[a]}}_\infty \leq \int_{\ft^2\times [0,\infty)} C^2(t+1)^2\e^{(\beta^2+2\beta\eta)t} \,\ind_{t\leq u\wedge v}\, \,\mu(\mathrm du) \mu(\mathrm dv) \dt<\infty.
	\]
\end{proof}

\addcontentsline{toc}{section}{Appendix}
\appendix
\begin{appendix}

%%%%%%%%%%%%%%%%%%%%%%%%%%%%%%%%%%%%%%

\section{Appendix}

%%%%%%%%%%%%%%%%%%%%%%%%%%%%%%%%%%%%%%

The following lemma helps to control the terms in (\ref{decomp}).
\begin{lem}\label{MZ}
	If $p\in[1,\gamma\wedge2)$, then $Z_\be,W_\be\in L^p$ and there exists $B_{p}>0$ such that
	\[
	\es\left[\bigg|\sun \eb Z^u_\be\bigg|^p\,\Bigg|\,\fn \right]\leq B_{p}\,\es|Z_\be|^p \sun \e^{p(\be X_u-n\cb)},
	\]
	\[
	\es\left[\bigg|\sun \eb (X_u-\be n)(W^u_\be-1)\bigg|^p\,\Bigg|\,\fn \right]\leq B_{p}\,\es|\wb-1|^p \sun \e^{p(\be X_u-n\cb)}|X_u-\be n|^p.
	\]
\end{lem}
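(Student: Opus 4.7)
The plan is to recognize both bounds as instances of the classical von Bahr--Esseen moment inequality for sums of independent centered random variables, applied conditionally on $\fn$. Recall that this inequality states: for $p \in [1,2]$ and independent centered $Y_1,\dots,Y_N \in L^p$, one has $\es|\sum_i Y_i|^p \leq 2 \sum_i \es|Y_i|^p$. The right-hand sides in the lemma are of exactly this shape, so everything reduces to (a) $L^p$-integrability of the copies and (b) the conditional independence/centering structure.

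My first task would be to verify that $Z_\be, W_\be \in L^p$ for $p \in [1, \gamma \wedge 2)$. For $W_\be$ this is the Biggins--Guivarc'h characterization recalled in the introduction. For $Z_\be$ I would invoke the almost sure analyticity of $\be \mapsto W_\be$ on the domain $\Lambda$ (Biggins, cited above Figure \ref{domain}): since $Z_\be = \partial_\be W_\be$ can be written as a Cauchy contour integral of $W_z$ over a small circle around $\be$ contained in the $L^p$ phase, and $\|W_z\|_p$ is uniformly bounded on such a circle, Minkowski's integral inequality yields $Z_\be \in L^p$. Integrability of the negative part of $Z_\be$ is painless, as the left tail is at worst exponential.

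Next I would unpack the conditional independence structure. The weights $\eb$ and the displacements $(X_u - \be n)$ are $\fn$-measurable, while by construction $(Z^u_\be, W^u_\be)_{|u|=n}$ are i.i.d.\ copies of $(Z_\be, W_\be)$ independent of $\fn$. Hence, conditionally on $\fn$, the summands $\eb Z^u_\be$ form a finite collection of independent random variables with conditional mean zero (using $\es Z_\be = 0$, which follows from the martingale property and the fact that $Z_0(\be)=0$). Similarly $\eb (X_u - \be n)(W^u_\be - 1)$ are conditionally independent with zero conditional mean, since $\es W_\be = 1$.

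Applying von Bahr--Esseen conditionally on $\fn$ then yields directly
\[
\es\Bigl[\bigl|\sun \eb Z^u_\be\bigr|^p \,\Big|\, \fn\Bigr] \leq 2 \sun \e^{p(\be X_u - n\cb)} \,\es|Z_\be|^p,
\]
and the analogous bound for the second sum, after pulling the $\fn$-measurable factor $|X_u - \be n|^p$ out of the conditional expectation. Setting $B_p = 2$ concludes. The main obstacle is really the $L^p$-integrability of $Z_\be$, which is the only place where the restriction $p<\gamma$ enters the argument; the rest is a purely formal consequence of conditional independence and centering.
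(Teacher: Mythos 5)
Your proof is correct and follows essentially the same route as the paper: the paper first applies the Marcinkiewicz--Zygmund inequality conditionally on $\fn$ and then uses subadditivity of $x\mapsto x^{p/2}$ (valid since $p<2$), which is exactly a derivation of the von Bahr--Esseen bound you invoke directly, so the two are the same argument under different names. The only genuine addition is your Cauchy-integral/Minkowski justification of $Z_\be\in L^p$, which the paper leaves implicit; that step is fine, though one should note it relies on the complex $L^p$-convergence domain of \cite{biggins92} containing a neighbourhood of $\be$, and your aside about the left tail being ``at worst exponential'' should be dropped since that is the content of Theorem \ref{lowerThbis} itself and is not needed once the contour argument is in place.
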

\begin{proof}
	If $Y_1,...,Y_N$ are centered and independent random variables in $L^p$, the Marcinkiewicz-Zygmund inequality provides a positive $B_p$ (which does not depend on $Y$) such that
	\[
	\es\left[\left|\sum_{k=1}^N Y_k\right|^p\right]\leq B_p\, \es\left[\Bigg(\sum_{k=1}^N Y_k^2\Bigg)^{p/2}\right].
	\]
	Now the function $x\mapsto x^{p/2}$ is subadditive since $p<2$, thus
	\[
	\es\left[\left|\sum_{k=1}^N Y_k\right|^p\right]\leq B_p\, \es\left[\sum_{k=1}^N |Y_k|^p\right],
	\]
	and the proof of the proposition is a consequence of this last inequality applied to the random variables $\eb Z_\beta^u$ and $\eb (X_u-\be n)(W^u_\be-1)$ conditionally on $\fn$.
\end{proof}

\begin{lem}\label{det}
There exists $\theta>0$ such that
\[
\log \det \Sigma_n = \theta 2^n - 2\log 2 + o(1).
\] 
\end{lem}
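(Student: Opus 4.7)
The plan is to start from the explicit formula $\det \Sigma_n = (2^n-1)\prod_{k=0}^{n-1}(2^{n-k}-1)^{2^k}$ stated earlier in the subsection (which follows from the block recursion $\Sigma_{n+1}=\operatorname{diag}(J_n+\Sigma_n,\,J_n+\Sigma_n)$, together with the fact that $\mathbf 1$ is an eigenvector of $\Sigma_n$ so the rank-one perturbation $J_n=\mathbf 1\mathbf 1^{\!\top}$ can be treated via the matrix determinant lemma). Taking logarithms reduces the lemma to an elementary asymptotic analysis of
\[
\log\det\Sigma_n \;=\; \log(2^n-1)+\sum_{k=0}^{n-1} 2^k\log(2^{n-k}-1).
\]

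First I would reindex by $j=n-k$ to get
\[
\sum_{k=0}^{n-1} 2^k\log(2^{n-k}-1)=2^n\sum_{j=1}^{n} 2^{-j}\log(2^j-1),
\]
and split $\log(2^j-1)=j\log 2+\log(1-2^{-j})$. The first piece is handled by the closed form $\sum_{j=1}^{n} j\,2^{-j}=2-(n+2)2^{-n}$, yielding a contribution $2^{n+1}\log 2-(n+2)\log 2$. The second piece converges absolutely, since $|\log(1-2^{-j})|\leq 2\cdot 2^{-j}$, to a constant $S_\infty\coloneqq\sum_{j=1}^\infty 2^{-j}\log(1-2^{-j})$, and the tail from $j>n$ is of order $4^{-n}$. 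Combined with $\log(2^n-1)=n\log 2+O(2^{-n})$, all the $n\log 2$ terms cancel except for a residual $-2\log 2$, giving
\[
\log\det\Sigma_n \;=\; \bigl(2\log 2+S_\infty\bigr)\,2^n \;-\; 2\log 2 \;+\; O(2^{-n}).
\]
Setting $\theta\coloneqq 2\log 2+S_\infty$ yields the claimed asymptotic with an error that is in fact exponentially small, hence certainly $o(1)$.

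It remains to show $\theta>0$. Using the bound $|\log(1-2^{-j})|\leq 2\cdot 2^{-j}$ for $j\geq 1$, one gets $|S_\infty|\leq 2\sum_{j\geq 1}4^{-j}=\tfrac{2}{3}$, so $\theta\geq 2\log 2-\tfrac{2}{3}>0$ (numerically $\theta\approx 0.9458$, matching the value quoted in the excerpt). There is no real obstacle here: everything reduces to manipulations of geometric and arithmetico-geometric series, and the only mild care required is in tracking the constant subdominant term $-2\log 2$, which comes from the cancellation between the $n\log 2$ contribution of $\log(2^n-1)$ and the $-(n+2)\log 2$ contribution from $\sum j\,2^{-j}$.
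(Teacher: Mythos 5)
Your argument is essentially identical to the paper's: both start from the closed-form $\det \Sigma_n = (2^n-1)\prod_{k=0}^{n-1}(2^{n-k}-1)^{2^k}$, take logarithms, reindex so the sum is $2^n\sum_{j=1}^n 2^{-j}\log(2^j-1)$, split $\log(2^j-1)=j\log 2 + \log(1-2^{-j})$, evaluate the arithmetico-geometric sum exactly, and observe the cancellation producing the constant $-2\log 2$, with $\theta = 2\log 2 + \sum_{j\geq 1}2^{-j}\log(1-2^{-j})$. Your extra verification that $\theta>0$ via $|S_\infty|\leq 2/3 < 2\log 2$ is a small, correct addition that the paper leaves implicit.
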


\begin{proof}
\begin{align*}
	\log\det\Sigma_n&=\log\left(2^n-1\right)+\sum_{k=0}^{n-1} 2^k\log\left(2^{n-k}-1\right)\\
	&=\log\left(2^n- 1\right) +2^n\sum_{k=1}^{n}\frac{1}{2^k}\left(k\log2+\log\left(1-\frac{1}{2^k}\right)\right)\\
	&=2^n\left(\log2\sum_{k=1}^{n}\frac{k}{2^k}+\sum_{k=1}^n\frac{1}{2^k}\log\left(1-\frac{1}{2^k}\right)+\frac{\log(2^n-1)}{2^n}\right)\\
	&=2^n\left(\theta\underbrace{-\log2\sum_{k=n+1}^{\infty}\frac{k}{2^k}-\sum_{k=n+1}^{\infty}\frac{1}{2^k}\log\left(1-\frac{1}{2^k}\right)+\frac{\log(2^n-1)}{2^n}}_{\frac{-2\log2}{2^n}+O(4^{-n})}\right),
\end{align*}
where we set $\theta = 2\log2+\sum_{k=1}^{\infty}\frac{1}{2^k}\log\left(1-\frac{1}{2^k}\right)$ and used $\sum_{k=n+1}^{\infty}\frac{k}{2^k} = \frac{n+2}{2^n}$ and \\$\sum_{k=n+1}^{\infty}\frac{1}{2^k}\log\left(1-\frac{1}{2^k}\right)=O(4^{-n})$.
\end{proof}

\end{appendix}

%%%%%%%%%%%%%%%%%%%%%%%%%%%%%%%%%%%%%
\section*{Acknowledgements}
%%%%%%%%%%%%%%%%%%%%%%%%%%%%%%%%%%%%%

The first author would like to thank Bastien Mallein for very stimulating discussions and for pointing out the scenario for the lower bound.

\addcontentsline{toc}{section}{References}

\bibliographystyle{abbrv}
\bibliography{biblio}

\end{document}